\documentclass{amsart}
\usepackage{arydshln}
\usepackage{mathabx}
\usepackage{mathtools}
\usepackage{amssymb,
amsmath,
amsthm,
graphicx,
amscd,
multind,
eufrak,
hyperref,
}
\theoremstyle{plain}
\newtheorem{thm}{Theorem}[section]
\newtheorem{lm}[thm]{Lemma}
\newtheorem{prop}[thm]{Proposition}
\newtheorem{conj}[thm]{Conjecture}

\theoremstyle{definition}
\newtheorem{de}[thm]{Definition}
\newtheorem{ex}[thm]{Example}

\newcommand{\CC}{{\mathbb C}}
\newcommand{\RR}{{\mathbb R}}
\newcommand{\ZZ}{{\mathbb Z}}
\newcommand{\PP}{{\mathbb P}}
\newcommand{\im}{\operatorname{im}}
{\begin{figure} \begin{center}}%
{\end{center} \end{figure}}

\newcommand{\Sym}{\operatorname{Sym}}
\newcommand{\diag}{\operatorname{diag}\nolimits}

\newcommand{\be}{\mathbf{e}}
\newcommand{\bu}{\mathbf{u}}
\newcommand{\bv}{\mathbf{v}}

\newcommand{\OO}{\operatorname{O}\nolimits}
\newcommand{\SO}{\operatorname{SO}\nolimits}

\newcommand{\dd}{\mathrm{d}}
\newcommand{\lieg}[1]{\mathrm{#1}}

\newcommand{\Crit}{\mathrm{Crit}}
\renewcommand{\phi}{\varphi}
\newcommand{\R}{\mathbb{R}}
\newcommand{\comment}[1]{}

\begin{document}

\begin{abstract}
Motivated by the many potential applications of low-rank multi-way tensor
approximations, we set out to count the rank-one tensors that are critical
points of the distance function to a general tensor $v$. As this count
depends on $v$, we average over $v$ drawn from a Gaussian distribution,
and find a formula that relates this average to problems in random
matrix theory.
\end{abstract}

\title[Critical rank-one approximations]{The average number
of critical\\ rank-one approximations to a tensor}
\author{Jan Draisma}
\address{
Department of Mathematics and Computer Science\\
Technische Universiteit Eindhoven\\
P.O. Box 513, 5600 MB Eindhoven, The Netherlands\\
and Vrije Universiteit Amsterdam and Centrum Wiskunde en
Informatica, Amsterdam, The Netherlands.}
\thanks{JD is supported by a Vidi grant from the Netherlands
Organisation for Scientific Research (NWO)}
\email{j.draisma@tue.nl}
\author{Emil Horobe\c{t}}
\address{
Department of Mathematics and Computer Science\\
Technische Universiteit Eindhoven\\
P.O. Box 513, 5600 MB Eindhoven, The Netherlands.}
\thanks{EH is supported by an NWO free competition grant}
\email{e.horobet@tue.nl}
\maketitle

\section{Introduction} \label{sec:Intro}
Low-rank approximation of {\em matrices} via singular value decomposition
is among the most important algebraic tools for solving approximation
problems in data compression, signal processing, computer vision,
 etc. Low-rank approximation for {\em tensors} has the
same application potential, but raises substantial mathematical and
computational challenges. To begin with, tensor rank and many related
problems are NP-hard \cite{Hastad90,Hillar10}, although in low degrees
(symmetric) tensor decomposition has been approached computationally in
\cite{Brachat10,Oeding13} by greatly generalising classical techniques
due to Sylvester and contemporaries. Furthermore, tensors of bounded
rank do not form a closed subset, so that a best low-rank approximation
of a tensor on the boundary does not exist \cite{deSilva08}. This
latter problem does not occur for tensors of rank at most one, which
{\em do} form a closed set, and where the best rank-one approximation
does exist under a suitable genericity assumption \cite{Friedland12b}.

In spite of these mathematical difficulties, much application-oriented
research revolves around algorithms for computing low-rank approximations
\cite{Belzen08,Belzen09,Comon08,DeLathauwer08a,DeLathauwer08b,DeLathauwer08c,
Lim05,Ishteva11}. Typically, these algorithms are of a local nature and would
get into problems near non-minimal critical points of the distance
function to be minimised. This motivates our study into the question of
{\em how many} critical points one should expect in the easiest nontrivial
setting, namely that of approximation by rank-one tensors.  This number
should be thought of as a measure of the complexity of finding the closest
rank-one approximation. The corresponding complex count, which is the
topic of \cite{Friedland12b} and with which we will compare our results,
measures the degree of an algebraic field extension needed to write down
the critical points as algebraic functions of the tensor to be minimised.
We will treat both ordinary tensors and symmetric tensors.

\subsection*{Ordinary tensors}
To formulate our problem and results, let $n_1,\ldots,n_p$ be natural
numbers and let $X \subset V:=\R^{n_1}\otimes\cdots \otimes \R^{n_p}$
be the variety of rank-one $p$-way tensors, i.e., those that can be
expressed as $x_1 \otimes x_2 \otimes \cdots \otimes x_p$ for vectors
$x_i \in \RR^{n_i},\ i=1,\ldots,p$. Given a general tensor $v \in
V:=\R^{n_1}\otimes\cdots \otimes \R^{n_p}$, one would like to compute
$x \in X$ that minimizes the squared Euclidean distance \[d_v(x) =
\sum_{i_1,\ldots,i_p} (v_{i_1,\ldots,i_p}-x_{i_1,\ldots,i_p})^2\]
from $v$. For the matrix case, where $p=2$, this minimizer is
$\sigma x_1 x_2^T$ where $\sigma$ is the largest singular value of
$v$ and $x_1,x_2$ are the corresponding left and right singular
vectors. Indeed, all critical points of $d_v$ are of this form,
with $\sigma$ running through all singular values of $v$. For $p>2$,
several algorithms have been proposed for rank-one approximation (see,
e.g., \cite{Belzen07,DeLathauwer00}). These algorithms have a local nature
and experience difficulties near critical points of $d_v$. This is one
of our motivations for {\em counting} these critical points---the main
goal of this paper.

In \cite{Friedland12b}, a general formula is found for the number of {\em
complex} critical points of $d_v$ on $X_\CC$. In this case the $x_i$ can
have complex coefficients and the expression $d_v$ is copied verbatim,
i.e., without inserting complex conjugates. This means that $d_v(x)$
does not really measure a (squared) distance---e.g., it can be zero even
for $x \neq v$---but on the positive side the number of critical points
of $d_v$ on $X_\CC$ is constant for $v$ away from some hypersurface
(which in particular has measure zero) and this constant is the top
Chern class of some very explicit vector bundle \cite{Friedland12b}. 
For more information on this hypersurface, see \cite[Section 7]{Draisma13c} and
\cite{Horobet15a}. Explicit equations for these hypersurfaces are not
known, even in our setting. 

Over the {\em real} numbers, which we consider, the number of critical
points of $d_v$ can jump as $v$ passes through (the real locus of) the
same hypersurface. Typically, it jumps by $2$, as two real cricital points
come together and continue as a complex-conjugate pair of critical points.
To arrive at a single number, we therefore impose
a probability distribution on our data space $V$ with density function
$\omega$ (soon specialized to a standard multivariate Gaussian), and we
ask: what is the {\em expected number} of critical points of $d_v$ when
$v$ is drawn from the given probability distribution? In other words,
we want to compute
\[
\int\limits_{\R^{n_1}\otimes \cdots \otimes \R^{n_p}} \#\{\text{real critical points of } d_v \text{ on } X \} \omega(v) dv.
\]
This formula is complicated for two different reasons. First, given
a point $v \in V$, the value of the integrand at $v$ is not easy to
compute. Second, the integral is over a space of dimension $N:=\prod_i
n_i$, which is rather large even for small values of the $n_i$. The main
result of this paper is the following formula for the above integral, in
the Gaussian case, in terms of an integral over a space of much smaller
dimension quadratic in the number $n:=\sum_i n_i$.

\begin{thm} \label{thm:Main}
Suppose that $v \in V$ is drawn from the (standard) multivariate Gaussian
distribution with (mean zero and) density function
\[ \omega(v):=\frac{1}{(2\pi)^{N/2}} e^{-(\sum_{\alpha} v_\alpha^2)/2}, \]
where the multi-index $\alpha$ runs over $\{1,\ldots,n_1\} \times \cdots \times
\{1,\ldots,n_p\}$. Then the expected number of critical points of $d_v$
on $X$ equals
\[
\frac{(2\pi)^{p/2}}{2^{n/2}}\frac{1}{\prod_{i=1}^p\Gamma\left(\frac{n_i}{2}\right)}\int\limits_{W_1}\left|\det
C(w_1)\right| \dd \mu_{W_1}.
\]
Here $W_1$ is a space of dimension $1+\sum_{i<j} (n_i-1)(n_j-1)$
with coordinates $w_0 \in \RR$ and $C_{ij} \in \RR^{(n_i-1) \times
(n_j-1)}$ with $i<j$, $C(w_1)$ is the symmetric $(n-p) \times
(n-p)$-matrix of block shape
\[
\begin{bmatrix}
w_0I_{n_1-1} & C_{1,2} & \cdots & C_{1,p} \\
C_{1,2}^T & w_0I_{n_2-1} & \cdots & C_{2,p}\\
\vdots & \vdots & & \vdots \\
C_{1,p}^T & C_{2,p}^T & \cdots & w_0 I_{n_p-1}
\end{bmatrix},
\]
and $\mu_{W_1}$ makes $w_0$ and the $\sum_{i<j} (n_i-1) \cdot
(n_j-1)$ matrix entries of the $C_{i,j}$ into independent, standard
normally distributed variables. Moreover, $\Gamma$ is Euler's gamma
function.
\end{thm}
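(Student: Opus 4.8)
The plan is to rewrite the expected count as an integral over an incidence manifold through the equidimensional area formula, and then to exploit the Gaussian density together with the $\prod_i\OO(n_i)$-invariance of the problem to integrate out all but a quadratic number of the variables, arriving at the integral over $W_1$. The starting point is Lim's variational principle: a smooth point of $X$, written $x=\sigma\,u_1\otimes\cdots\otimes u_p$ with $u_i\in S^{n_i-1}$ and $\sigma\in\RR$, is a critical point of $d_v$ exactly when
\[
 v\bullet(u_1,\dots,\widehat{u_i},\dots,u_p)=\sigma\,u_i\qquad(i=1,\dots,p),
\]
where $v\bullet(u_1,\dots,\widehat{u_i},\dots,u_p)\in\RR^{n_i}$ denotes $v$ contracted against $u_j$ in every slot $j\neq i$. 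The parametrization $(\sigma,u_\bullet)\mapsto\sigma\,u_1\otimes\cdots\otimes u_p$ of the smooth locus of $X$ is $2^p$-to-one, since the sign changes $u_i\mapsto -u_i$ compensated in $\sigma$ preserve both $x$ and the equations above; hence the number of real critical points of $d_v$ equals $2^{-p}$ times the number of solution tuples $(\sigma,u_\bullet)$.

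To build the incidence manifold, for $u_\bullet\in\prod_i S^{n_i-1}$ complete each $u_i$ to a local orthonormal frame $e^{(i)}_0=u_i,e^{(i)}_1,\dots,e^{(i)}_{n_i-1}$ of $\RR^{n_i}$. This decomposes $V$ into $2^p$ blocks indexed by the subsets $S\subseteq\{1,\dots,p\}$, with block $S$ spanned by the basis tensors carrying some $e^{(i)}_{\ge 1}$ in the slots $i\in S$ and $e^{(i)}_0$ elsewhere; moreover $T_xX$ is exactly the sum of the blocks with $|S|\le 1$ (of matching dimension $1+\sum_i(n_i-1)$). Thus $v$ makes $x=\sigma\,u_1\otimes\cdots\otimes u_p$ critical iff $v=\sigma\,u_1\otimes\cdots\otimes u_p+w$ with $w$ in the sum of the blocks with $|S|\ge 2$. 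Let $P$ be the manifold of all triples $(u_\bullet,\sigma,w)$ of this form and define $\Phi\colon P\to V$ by $\Phi(u_\bullet,\sigma,w):=\sigma\,u_1\otimes\cdots\otimes u_p+w$. Then $\dim P=(n-p)+1+M=N=\dim V$ with $M:=\sum_{|S|\ge 2}\prod_{i\in S}(n_i-1)$, and for generic $v$ the fibre $\Phi^{-1}(v)$ has exactly $2^p\cdot\#\{\text{critical points of }d_v\}$ points. Since $\det\dd\Phi$ will turn out to be a nonzero real-analytic function on $P$, the area formula applies and gives
\[
 2^p\int_V\#\{\text{crit. pts. of }d_v\}\,\omega(v)\,\dd v=\int_P\omega(\Phi)\,|\det\dd\Phi|\,\dd P,
\]
where $\dd\Phi$ is read off in the orthonormal coordinates $\mu^{(i)}_j$ (from $\dot u_i=\sum_j\mu^{(i)}_j e^{(i)}_j$), $\dot\sigma$, and the Euclidean coordinates of $\dot w$, and $\dd P$ is the product of the round sphere measures, $\dd\sigma$, and the fibre Lebesgue measure.

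The computational heart is the Jacobian. Letting the adapted frame evolve by $\dot e^{(i)}_0=\sum_j\mu^{(i)}_j e^{(i)}_j$ and $\dot e^{(i)}_j=-\mu^{(i)}_j e^{(i)}_0$ for $j\ge 1$, I would differentiate $\Phi$ and check that, with respect to the grading of $V$ by $|S|$, the matrix of $\dd\Phi$ is block lower triangular: the $|S|=0$ coordinate sees only $\dot\sigma$, with coefficient $1$; the $|S|=1$ coordinates see only the $\mu^{(i)}_j$, through a symmetric $(n-p)\times(n-p)$ matrix of the form $\sigma I_{n-p}-E(w)$ whose off-diagonal $(i,j)$-block is assembled from the $|S|=2$ part of $w$; and the $|S|\ge 2$ coordinates see $\dot w$ by the identity (together with strictly lower blocks coming from $\mu$). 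Hence $|\det\dd\Phi|=|\det(\sigma I_{n-p}-E(w))|$, which under the identification $w_0=\sigma$ and $C_{ij}=-E(w)_{ij}$ is precisely $|\det C(w_1)|$ from the statement; evaluating at $\sigma=1,\ w=0$ (where it equals $1$) shows in passing that $\det\dd\Phi\not\equiv 0$. I expect this step -- in particular verifying that the off-diagonal blocks which would spoil the triangular shape genuinely vanish, and that the surviving block is exactly $C(w_1)$ -- to be the main obstacle; the remainder is bookkeeping.

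It remains to integrate out the redundant variables. Because $u_1\otimes\cdots\otimes u_p$ is a unit vector orthogonal to every block with $|S|\ge 2$, one has $\|\Phi\|^2=\sigma^2+\|w\|^2$, so $\omega(\Phi)=(2\pi)^{-N/2}e^{-(\sigma^2+\|w\|^2)/2}$ and the integrand does not depend on $u_\bullet$ at all. Integrating over $u_\bullet$ contributes $\prod_i\mathrm{vol}(S^{n_i-1})=\prod_i 2\pi^{n_i/2}/\Gamma(n_i/2)$; integrating over the $|S|\ge 3$ part of $w$, which appears nowhere else, contributes $(2\pi)^{M'/2}$ with $M':=M-\sum_{i<j}(n_i-1)(n_j-1)$; and what is left is the Gaussian integral of $|\det C(w_1)|$ in the variables $\sigma$ and the entries of the $C_{ij}$, which equals $(2\pi)^{\dim W_1/2}\int_{W_1}|\det C(w_1)|\,\dd\mu_{W_1}$. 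Using $M'+\dim W_1=N-n+p$ to collect the powers of $2\pi$, and then dividing by $2^p$, turns the overall constant into $\frac{(2\pi)^{p/2}}{2^{n/2}}\,\frac{1}{\prod_i\Gamma(n_i/2)}$, which is the asserted formula. As a check, one verifies that this yields $1$ for $p=1$ and $\min_i n_i$ for $p=2$.
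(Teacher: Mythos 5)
Your proposal is correct and, at its core, is the same double-counting strategy on the same incidence manifold; what differs is the choice of coordinates on the base. You parametrize with $u_i$ on the unit spheres $S^{n_i-1}$, work intrinsically with a moving adapted orthonormal frame $\dot e^{(i)}_0 = \sum_j \mu^{(i)}_j e^{(i)}_j$, $\dot e^{(i)}_j = -\mu^{(i)}_j e^{(i)}_0$, and pay a $2^p$ covering factor (the sign flips $(u_i)\mapsto(\pm u_i)$ compensated in $\sigma$), which cancels exactly against the factor $2$ in each $\mathrm{vol}(S^{n_i-1})$. The paper instead parametrizes each $\PP V_i$ by the affine chart $u_i\mapsto[e_i+u_i]$ and transports $W$ via an explicit rotation $R_{u_i}\in\SO(V_i)$; the price is a chart-distortion factor $\prod_k(1+t_k^2)^{-n_k/2}$ that the paper then integrates out by a radial decomposition $\dd u_i = t_i^{n_i-2}\,\dd t\,\dd S$, using $\int_0^\infty t^{n_i-2}(1+t^2)^{-n_i/2}\dd t = \tfrac{\sqrt\pi}{2}\Gamma(\tfrac{n_i-1}{2})/\Gamma(\tfrac{n_i}{2})$ and $\mathrm{vol}(S^{n_i-2})$. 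These bookkeeping routes give the same product $\pi^{n/2}/\prod_i\Gamma(n_i/2)$, and yours is arguably the cleaner of the two.

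The step you single out as the main obstacle does go through exactly as you surmise. In the adapted frame, differentiating $\Phi$ with respect to $\mu^{(i)}_j$ produces (a) $\sigma\,u_1\otimes\cdots\otimes e^{(i)}_j\otimes\cdots\otimes u_p$ from the rank-one term, living in the $|S|=1$ block, and (b) from each $|S|=2$ summand of $w$ with $i\in S$ the term $-w^{\{i,k\}}_{(j,\ell)}$ in the $|S|=1$ block (via $\dot e^{(i)}_j=-\mu^{(i)}_j e^{(i)}_0$), while contributions raising $|S|$ land in $|S|\geq 2$ and thus do not spoil the triangular shape. This is precisely the paper's computation of the block $A_{(w,\bu)}$ (Proposition~\ref{form}), stripped of the $(1+t_k^2)$ factors, and identifies your $\sigma I_{n-p}-E(w)$ with $C(w_1)$ under $w_0=\sigma$, $C_{ik}=-(w^{\{i,k\}}_{(j,\ell)})$. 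The remaining Gaussian integrations and the power-counting $M'+\dim W_1 = N-n+p$ then yield the stated constant, as you indicate.

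Two small nits: in the sanity check, the case $p=2$ gives $\min(n_1,n_2)$ only via the singular-value interpretation (your outline implies this, but the determinantal computation is a Wishart calculation, done in the paper's Section 3.5); and when declaring the parametrization $2^p$-to-one, it is worth noting explicitly that each sign flip on $u_i$ can be absorbed by flipping $\sigma$ or another $u_j$, so that the stabilizer of $x=\sigma u_1\otimes\cdots\otimes u_p$ has order exactly $2^p$. Neither affects the validity of the argument.
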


Not only the dimension of the integral has dropped considerably, but also
the integrand can be evaluated easily. The following example illustrates
the case where all $n_i$ are equal to $2$.

\begin{ex} \label{ex:222}
Suppose that all $n_i$ are equal to $2$. Then the matrix
$C(w_1)$
becomes
\[ C(w_1)=
\begin{bmatrix}
w_0 & w_{12} & \cdots & w_{1p} \\
w_{12} & w_0 & \cdots & w_{2p} \\
\vdots & \vdots & & \vdots \\
w_{1p} & w_{2p} & \cdots & w_0
\end{bmatrix}
\]
where the distinct entries are independent scalar variables $\sim
\mathcal{N}(0,1)$. The expected number of critical points of $d_v$ on
$X$ equals
\[ \frac{(2\pi)^{p/2}}{2^{2p/2}} \frac{1}{\Gamma(\frac{1}{1})^p}
\mathbb{E}(|\det(C(w_1))|)
=
\left(\frac{\pi}{2}\right)^{p/2} \mathbb{E}(|\det(C(w_1))|),
\]
where the latter factor is the expected absolute value of the determinant
of $C(w_1)$. For $p=2$ that expected value of $|w_0^2-w_{12}^2|$ can
be computed symbolically and equals $4/\pi$. Thus the expression above
then reduces to $2$, which is just the number of singular values of a
$2 \times 2$-matrix. For higher $p$ we do not know a closed form
expression for $\mathbb{E}(|\det(C(w_1))|),$ but we will present some
numerical approximations in Section~\ref{sec:Values}.
\hfill $\diamondsuit$
\end{ex}

In Section~\ref{sec:Tensors} we prove Theorem~\ref{thm:Main}, and
in Section~\ref{sec:Values} we list some numerically computed values.
These values lead to the following intriguing {\em stabilization
conjecture}.

\begin{conj}
Suppose that $n_p-1>\sum_{i=1}^{p-1} n_i-1$. Then, in the Gaussian setting
of Theorem~\ref{thm:Main}, the expected number of critical points of $d_v$
on $X$ does not decrease if we replace $n_p$ by $n_p-1$.
\end{conj}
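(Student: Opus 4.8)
The plan is to turn the statement, via Theorem~\ref{thm:Main}, into a monotonicity property of the random-matrix integral, and then to study that integral by a rank-one update.

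Write $E(n_1,\dots,n_p)$ for the expected number produced by Theorem~\ref{thm:Main}, put $k:=n_p-1$ and $M:=\sum_{i=1}^{p-1}(n_i-1)$, so that the hypothesis of the conjecture implies $k\ge M+1$, and set
\[
I_k:=\int_{W_1}\bigl|\det C(w_1)\bigr|\,\dd\mu_{W_1}=\mathbb E\bigl|\det C^{(k)}\bigr|,
\]
where $C^{(k)}$ is the symmetric $(M+k)\times(M+k)$ Gaussian matrix of the theorem, with last diagonal block $w_0 I_k$. A short computation with the $\Gamma$-factors, using $\mathbb E|Z|^{s}=2^{s/2}\Gamma(\tfrac{s+1}{2})/\sqrt\pi$ for $Z\sim\mathcal N(0,1)$, shows that
\[
\frac{E(n_1,\dots,n_{p-1},n_p)}{E(n_1,\dots,n_{p-1},n_p-1)}=\frac{\mathbb E|Z|^{k-1}}{\mathbb E|Z|^{k}}\cdot\frac{I_k}{I_{k-1}} .
\]
Hence the conjecture reduces to showing that $J_k:=I_k/\mathbb E|Z|^{k}$ is non-decreasing for $k\ge M$.

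To obtain a recursion in $k$, Schur-complement out the block $w_0 I_k$: this gives $\det C^{(k)}=w_0^{k-M}\det\!\bigl(w_0^2 I_M+w_0 A_0-W_k\bigr)$, where $A_0$ is the symmetric Gaussian block built from the factors $1,\dots,p-1$, $W_k=BB^\top$ for a standard Gaussian $B\in\RR^{M\times k}$, and $A_0,w_0,W_k$ are independent; here $k\ge M$ is precisely what makes $W_k$ nonsingular almost surely and keeps the exponent $k-M$ nonnegative. Peeling the last rank-one summand off $W_k=W_{k-1}+bb^\top$, with $b\sim\mathcal N(0,I_M)$ fresh, and writing $G_{k-1}:=w_0^2 I_M+w_0 A_0-W_{k-1}$, the matrix determinant lemma gives $\det C^{(k)}=w_0\,(1-b^\top G_{k-1}^{-1}b)\,\det C^{(k-1)}$; taking absolute values and integrating out $b$ yields
\[
\frac{I_k}{I_{k-1}}=\mathbb E_{\nu_{k-1}}\!\bigl[\,|w_0|\,h(G_{k-1})\,\bigr],\qquad h(G):=\mathbb E_b\,\bigl|\,1-b^\top G^{-1}b\,\bigr| ,
\]
where $\nu_{k-1}$ is the joint law of $(A_0,w_0,W_{k-1})$ tilted by $|\det C^{(k-1)}|=|w_0|^{k-1-M}|\det G_{k-1}|$. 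So the target $J_k\ge J_{k-1}$ becomes
\[
\mathbb E_{\nu_{k-1}}\!\bigl[\,|w_0|\,h(G_{k-1})\,\bigr]\;\ge\;\frac{\mathbb E|Z|^{k}}{\mathbb E|Z|^{k-1}}=\mathbb E\,\chi_k ,
\]
where $\chi_k=\|g\|$ for $g\sim\mathcal N(0,I_k)$.

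This final inequality is where I expect the real difficulty to lie, and presumably why the statement is only a conjecture. Two facts point the right way. First, $h(G)\ge|1-\operatorname{tr}(G^{-1})|$ by Jensen, with $h(G)\ge 1$ whenever $G\preceq 0$. Second, the tilt weight $|w_0|^{k-1-M}|\det G_{k-1}|$ heavily favours large $|w_0|$: since $\det G_{k-1}$ is a monic polynomial of degree $2M$ in $w_0$, the effective tilt on $w_0$ behaves in the tail like $|w_0|^{k-1+M}$, under which $w_0^2$ is $\chi^2$-like with $k+M$ degrees of freedom, while there $G_{k-1}\approx w_0^2 I_M\succ 0$ forces $h(G_{k-1})\to 1$; since $M\ge 1$, the product then beats $\mathbb E\chi_k$ with room to spare. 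The obstacle is to make this rigorous outside the large-$|w_0|$ (equivalently large-$k$) tail: the absolute values destroy the polynomial structure of $\det C^{(k)}$ in $w_0$, the intermediate range in which $G_{k-1}$ has mixed signature is exactly where $h(G_{k-1})$ can drop below $1$ and where $\det G_{k-1}$ changes sign, and one must rule out a conspiracy in which the $|\det C^{(k-1)}|$-tilt concentrates its mass there. A Chebyshev-sum or FKG-type correlation inequality would finish the argument if $|w_0|$, $h(G_{k-1})$ and the tilt density could be shown to be comonotone along suitable one-parameter rescalings of $(w_0,W_{k-1})$; as $h(G_{k-1})$ is not visibly monotone in $w_0$, the fallback I would pursue is to couple rigorous large-$k$ asymptotics with a direct, possibly computer-assisted, verification of $J_k\ge J_{k-1}$ for the finitely many pairs with $M\le k-1<k$ below an explicit threshold, using the Bartlett $\chi^2$-product form of $W_{k-1}$ to evaluate $\mathbb E|\det C^{(k-1)}|$ in closed form.
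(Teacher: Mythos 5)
The statement is a \emph{conjecture} in the paper and the paper gives no proof of it; the authors explicitly remark, in the example following the conjecture, that they ``do not know how to turn this observation into a proof of this integral inequality.'' So there is no paper proof to compare you against. Your reduction is sound as far as it goes: the $\Gamma$-factor bookkeeping is correct, the Schur complement of the $w_0 I_k$ block does give
\[
\det C^{(k)} = w_0^{\,k-M}\,\det\bigl(w_0^2 I_M + w_0 A_0 - W_k\bigr),
\]
and the rank-one peeling $\det C^{(k)} = w_0\,(1-b^\top G_{k-1}^{-1}b)\,\det C^{(k-1)}$ is a correct application of the matrix determinant lemma, with $A_0$, $w_0$, $B$ independent as you claim. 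You have thereby made precise the authors' informal remark that the larger determinant is ``approximately $w_0$ times'' the smaller one. (One small caveat worth recording: $A_0$ is not a full GOE-type matrix --- its diagonal blocks are identically zero --- which does not affect these manipulations but would matter if one tried to describe $\nu_{k-1}$ more explicitly.)

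The genuine gap is exactly the one you flag yourself: the terminal inequality
\[
\mathbb E_{\nu_{k-1}}\bigl[\,|w_0|\,h(G_{k-1})\,\bigr]\ \ge\ \mathbb E\,\chi_k
\]
is unproved, and none of the auxiliary facts you cite (Jensen lower bound $h(G)\ge|1-\operatorname{tr} G^{-1}|$, $h(G)\ge 1$ for $G\preceq 0$, tail dominance of the $|w_0|$-tilt) control the indefinite-$G_{k-1}$ regime where $h$ can dip below $1$ and $\det G_{k-1}$ changes sign. No comonotonicity structure is visible that would let an FKG-type correlation inequality close the argument, and the fallback computer verification would at best cover finitely many $(M,k)$ rather than settle the conjecture. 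So this is an honest and correctly executed partial reduction that sharpens the problem, not a proof; the conjecture remains open, as it does in the paper.
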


For $p=2$ this follows from the statement that the number of singular
values of a sufficiently general $n_1 \times n_2$-matrix with $n_1<n_2$
equals $n_1$, which in fact remains the {\em same} when
replacing $n_2$ by $n_2-1$. For arbitrary $p$ the statement is true over
$\CC$ as shown in \cite{Friedland12b}, again with equality, but the proof is not bijective.
Instead, it uses vector bundles and Chern
classes, techniques that do not carry over to our setting. It would be
very interesting to find a direct geometric argument that {\em does}
explain our experimental findings over the reals, as well.

\begin{ex}
Alternatively, one could try and prove the conjecture directly from the
integral formula in Theorem~\ref{thm:Main}. The smallest open case is when $p=3$ and $(n_1,n_2,n_3)=(2,2,4)$, and here the conjecture says that
\begin{align*}
\frac{\sqrt{\pi}}{2\sqrt{2}}&\int\limits_{\mathbb{R}}\int\limits_{\mathbb{R}^7}\left|\det\left(
                                            \begin{array}{c:c:ccc}
                                              w_0 & w_{12} & w_{13} & w_{14} & w_{15} \\\hdashline
                                              w_{12} & w_0 & w_{23} & w_{23} & w_{25}\\ \hdashline
                                              w_{13} & w_{23} & w_0 & 0 &0\\
                                              w_{14} & w_{24} & 0 & w_0 &0\\
                                              w_{15} & w_{25} & 0 & 0 & w_0
                                            \end{array}
                                          \right)
\right|e^{-\frac{w_0^2+\sum w_{ij}^2}{2}}\mathrm{d}w_0 \mathrm{d}w_{ij}\\
\\
\leq&\int\limits_{\mathbb{R}}\int\limits_{\mathbb{R}^5}\left|\det\left(
                                            \begin{array}{c:c:cc}
                                              w_0 & w_{12} & w_{13} & w_{14} \\\hdashline
                                              w_{12} & w_0 & w_{23} & w_{24}\\ \hdashline
                                              w_{13} & w_{23} & w_0 & 0 \\
                                              w_{14} & w_{24} & 0 & w_0
                                            \end{array}
                                          \right)
\right|e^{-\frac{w_0^2+\sum w_{ij}^2}{2}}\mathrm{d}w_0 \mathrm{d}w_{ij}.
\end{align*}
The determinant in the first integral is approximately $w_0$ times a
determinant like in the second integral, but we do not know how to turn
this observation into a proof of this integral inequality.
\hfill $\diamondsuit$
\end{ex}

\subsection*{Symmetric tensors}
In the second part of this paper, we discuss {\em symmetric
tensors}. There we consider the space $V=S^p \RR^n$ of homogeneous
polynomials of degree $p$ in the standard basis $e_1,\ldots,e_n$ of
$\RR^n$, and $X$ is the subvariety of $V$ consisting of all polynomials
that are of the form $\pm u^p$ with $u \in \RR^n$. We equip $V$
with the {\em Bombieri norm}, in which the monomials in the $e_i$
form an orthogonal basis with squared norms \[ ||e_1^{\alpha_1} \cdots
e_n^{\alpha_n}||^2=\frac{\alpha_1! \cdots \alpha_n!}{p!}. \] Our result
on the average number of critical points of $d_v$ on $X$ is as follows.

\begin{thm} \label{thm:Symm}
When $v \in S^p \RR^n$ is drawn from the standard Gaussian
distribution relative to the Bombieri norm, then the
expected number of critical points of $d_v$ on the variety
of (plus or minus) pure $p$-th powers equals
\begin{align*}
&\frac{1}{2^{(n^2+3n-2)/4} \prod_{i=1}^n
\Gamma(i/2)}
\int\limits_{\lambda_2\leq \ldots\leq
\lambda_{n}}\int\limits_{-\infty}^{\infty}\left(\prod_{i=2}^{n}|\sqrt{p}w_0-
\sqrt{p-1} \lambda_i|\right)\\
&\cdot
\left(\prod_{i<j}(\lambda_j-\lambda_i)
\right) e^{-w_0^2/2-\sum_{i=2}^n \lambda_i^2/4}
\dd
w_0 \dd \lambda_2 \cdots \dd \lambda_n.
\end{align*}
\end{thm}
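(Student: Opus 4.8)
The plan is to mirror the proof of Theorem~\ref{thm:Main}, replacing the general tensor product $\R^{n_1}\otimes\cdots\otimes\R^{n_p}$ by the space $S^p\R^n$ of symmetric tensors, and to reduce the symmetric Hessian obstacle to an eigenvalue problem for a single Wishart-type random matrix, which produces the Vandermonde factor $\prod_{i<j}(\lambda_j-\lambda_i)$.

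First I would parametrize the rank-one symmetric variety $X$ locally. A point of $X$ near a general $v$ is $\pm u^p$, and by rescaling we may normalize $\|u\|=1$. So $X$ is (up to sign) the image of the unit sphere $S^{n-1}$ under $u\mapsto u^p$, a manifold of dimension $n$. At a critical point of $d_v$, the residual $v - cu^p$ is orthogonal to the tangent space $T_{u^p}X$, which is spanned by $u^p$ itself and by $u^{p-1}h$ for $h\perp u$. Orthogonality to $u^p$ fixes $c = \langle v, u^p\rangle / \|u^p\|^2$ in the Bombieri norm, and orthogonality to the $u^{p-1}h$ gives $n-1$ equations. So the critical points are indexed by the solutions $u\in S^{n-1}$ of a system $F(u)=0$ with $F\colon S^{n-1}\to\R^{n-1}$, and the expected count is $\mathbb{E}_v \#F^{-1}(0)$.

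Next I would apply the integral-geometry / Kac--Rice strategy used for Theorem~\ref{thm:Main}. Writing the count as $\int_{S^{n-1}}|\det J| \,\delta(F(u))$ and then integrating over $v$, one brings in the $v$-Gaussian and the Jacobian $J$ of the normal equations; by the orthogonal invariance of the Bombieri norm (the group $\OO(n)$ acts on $S^p\R^n$ by isometries) we may fix $u = e_1$ and multiply by the volume of $S^{n-1}$, which is where the $\prod_{i}\Gamma(i/2)$ and the power of $2$ enter. After fixing $u=e_1$, the relevant data are: the residual component driving the $n-1$ linear equations (a Gaussian vector), and the Hessian-type matrix $C$ controlling $\det J$. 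A computation with the Bombieri inner product shows that $C$ has the shape $\sqrt{p}\,w_0 I_{n-1} - \sqrt{p-1}\,S$, where $w_0\sim\mathcal{N}(0,1)$ is a single scalar (the coefficient $c$, suitably normalized) and $S$ is a symmetric $(n-1)\times(n-1)$ Gaussian matrix whose entries are independent with the appropriate variances forced by the Bombieri norms of the monomials $e_1^{p-2}e_ie_j$. Diagonalizing $S = Q\,\diag(\lambda_2,\ldots,\lambda_n)\,Q^T$ and using the classical formula for the joint density of eigenvalues of a real symmetric Gaussian (GOE-type) matrix introduces exactly the Vandermonde $\prod_{i<j}(\lambda_j-\lambda_i)$ and the Gaussian weight $e^{-\sum\lambda_i^2/4}$, while $|\det C| = \prod_{i=2}^n|\sqrt{p}w_0 - \sqrt{p-1}\lambda_i|$; integrating over the orthogonal $Q$ contributes another ratio of $\OO(k)$-volumes that gets absorbed into the constant.

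The hardest step is the bookkeeping of normalization constants: one must carefully track how the Bombieri norms $\|e_1^{\alpha_1}\cdots e_n^{\alpha_n}\|^2 = \alpha_1!\cdots\alpha_n!/p!$ determine the variances of the Gaussian entries appearing in $w_0$, in the linear-equation vector, and in $S$ (in particular the distinct factors $\sqrt{p}$ and $\sqrt{p-1}$ come from $\|e_1^p\|$ and $\|e_1^{p-1}e_i\|$ versus $\|e_1^{p-2}e_i^2\|$), and then how the Jacobian of the change of variables $S\mapsto(\lambda,Q)$, the sphere volume $\mathrm{vol}(S^{n-1})$, and the Gaussian prefactor $(2\pi)^{-\bullet}$ combine into the single constant $1/(2^{(n^2+3n-2)/4}\prod_{i=1}^n\Gamma(i/2))$. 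I expect the geometric skeleton to go through essentially as in the ordinary-tensor case, with the symmetric-matrix eigenvalue density replacing the unstructured matrix $C(w_1)$ of Theorem~\ref{thm:Main}; the genuinely new content, and the main place an error could hide, is getting that exponent $(n^2+3n-2)/4$ right.
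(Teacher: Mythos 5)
Your proposal takes essentially the same route as the paper: the paper also parameterizes $\Crit$ via a rotation $R_u$ along the sphere, exploits $\OO(n)$-invariance of the Bombieri inner product to fix $u=e_1$ (picking up a factor $\mathrm{Vol}(S^{n-2})$ and a radial integral in $t=\|u\|$), computes the Jacobian block and finds $\det A = (1+t^2)^{-n/2}\det(\sqrt{p}\,w_0 I - \sqrt{p-1}\,C(w_1))$ with $C(w_1)$ a GOE-scaled symmetric Gaussian matrix, and then passes to eigenvalues via the standard joint density (Muirhead), yielding the Vandermonde and the exponent $(n^2+3n-2)/4$. Your ``Kac--Rice'' framing is just a reformulation of the paper's double-counting/coarea step from Section~2, and the rest of your outline matches the paper's Proposition~4.5 and the subsequent dimension reduction.
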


Here the dimension reduction is even more dramatic: from an integral over
a space of dimension $\binom{p+n-1}{p}$ to an integral over a polyhedral
cone of dimension $n$. In this case, the corresponding complex count
is already known from \cite{Cartwright13}: it is the geometric series
$1+(p-1)+\cdots+(p-1)^{n-1}$.

\begin{ex} \label{ex:Symm}
For $p=2$ the integral above evaluates to $n$ (see
Subsection~\ref{ssec:Symmats} for a direct computation).  Indeed, for $p=2$
the symmetric tensor $v$ is a symmetric matrix, and the critical points
of $d_v$ on the manifold of rank-one symmetric matrices are those of
the form $\lambda u u^T$, with $u$ a norm-$1$ eigenvector of $v$
with eigenvalue $\lambda$.

For $n=2$ it turns out that the above integral can also be evaluated in
closed form, with value $\sqrt{3p-2}$; a different proof of this fact
appeared in \cite{Draisma13c}. For $n=3$ we provide a closed formula
in Section~\ref{sec:Values}. In all of these cases, the average count is
an algebraic number. We do not know if this persists for larger values
of $n$. \hfill $\diamondsuit$
\end{ex}

\subsection*{Outline}
The remainder of this paper is organized as follows. First, in
Section~\ref{sec:DoubleCounting} we explain a double counting strategy
for computing the quantity of interest. This strategy is then applied to
ordinary tensors in Section~\ref{sec:Tensors} and to symmetric tensors
in Section~\ref{sec:SymTensors}.  We conclude with some (symbolically
or numerically) computed values in Section~\ref{sec:Values}.

\subsection*{Acknowledgments}
This paper fits in the research programme laid out in \cite{Draisma13c},
which asks for {\em Euclidean distance degrees} of algebraic varieties
arising in applications. We thank the authors of that paper, as well as
our Eindhoven colleague Rob Eggermont, for several stimulating discussions
on the topic of this paper.

\section{Double counting} \label{sec:DoubleCounting}

Suppose that we have equipped $V=\RR^N$ with an inner product $(.|.)$
and that we have a smooth manifold $X \subseteq V$. Assume that we
have a probability density $\omega$ on $V=\RR^N$ and that we want to
count the average number of critical points $x \in X$ of the function
$d_v(x):=(v-x|v-x)$ when $v$ is drawn according to that density. Let $\Crit$
denote the set
\[ \Crit:=\{(v,x) \mid v-x \perp T_x X\} \subseteq V \times X \]
of pairs $(v,x) \in X \times V$ for which $x$ is a critical point of
$d_v$. For fixed $x \in X$ the $v \in V$ with
$(v,x) \in \Crit$ form an affine space, namely, $x+(T_x X)^\perp$. In
particular, $\Crit$ is a manifold of dimension $N$. On the other hand,
for fixed $v \in V$, the $x \in X$ for which $(v,x) \in \Crit$ are what
we want to count. Let $\pi_V:\Crit \to V$ be the first projection. Then
(the absolute value of) the pull-back $|\pi_V^* \omega \dd v|$ is a pseudo
volume form on $\Crit$, and we have
\[
\int_{V} \#(\pi_V^{-1}(v)) \omega(v) \dd v=
\int_{\Crit} 1 |\pi_V^* \omega \dd v|.
\]
Now suppose that we have a smooth $1:1$ parameterization $\phi:\RR^N
\to \Crit$ (perhaps defined outside some set of measure zero). Then the
latter integral is just
\[ \int_{\RR^N} |\det J_w (\pi_V \circ \phi)| \omega(\pi_V(\phi(w))) \dd w, \]
where $J_w (\pi_V \circ \phi)$ is the Jacobian of $\pi_V \circ \phi$ at
the point $w$. We will see that if $X$ is the manifold of rank-one tensors
or rank-one symmetric tensors, then $\Crit$ (or in fact, a slight variant
of it) has a particularly friendly parameterization, and we will use the
latter expression to compute the expected number of critical points of
$d_v$. In a more general setting, this double-counting approach is
discussed in \cite{Draisma13c}.

\section{Ordinary tensors} \label{sec:Tensors}

\subsection{Set-up}
Let $V_1,\ldots,V_p$ be real vector spaces of dimensions $n_1 \leq \ldots
\leq n_p$ equipped with positive definite inner products $(.|.)$. Equip
$V:=\bigotimes_{i=1}^p V_i$, a vector space of dimension $N:=n_1 \cdots
n_p$, with the induced inner product and associated norm, also denoted
$(.|.)$. Given a tensor $v \in V$, we want to count the number of critical
points of the function
\[ d_v: x \mapsto ||v-x||^2=(v|v)-2(v|x)+(x|x) \]
on the manifold $X \subseteq V$ of non-zero rank-one tensors $x=x_1
\otimes \cdots \otimes x_p$. The following well-known lemma (see
for instance \cite{Friedland12b}) characterizes which $x$ are critical for a
given $v \in V$. In its statement we extend the notation $(v | u)$
to the setting where $u$ is a tensor in $\bigotimes_{i \in I} V_i$
for some subset $I \subseteq \{1,\ldots,p\}$, to stand for the tensor
in $\bigotimes_{i \not\in I} V_i$ obtained by contracting $v$ with $u$
using the inner products.

\begin{lm}
The non-zero rank-one tensor $x=x_1 \otimes \cdots \otimes x_p$ is
a critical point of $d_v$ if and only if for all $i=1,\ldots,p$
we have
\[ (v | x_1 \otimes \cdots \otimes \hat{x_i} \otimes \cdots \otimes x_p)=
\left(\prod_{j \neq i}(x_j|x_j) \right) x_i. \]
\end{lm}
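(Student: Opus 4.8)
The plan is to compute the gradient of $d_v$ restricted to $X$ and set it to zero, using the parameterization of $X$ by the (multilinear) map $(x_1,\ldots,x_p) \mapsto x_1 \otimes \cdots \otimes x_p$. Since $X$ is the image of this map and the map is a submersion onto $X$ away from the locus where some $x_i = 0$, the tangent space $T_x X$ at $x = x_1 \otimes \cdots \otimes x_p$ is spanned by the tensors $x_1 \otimes \cdots \otimes y_i \otimes \cdots \otimes x_p$ as $i$ ranges over $1,\ldots,p$ and $y_i$ ranges over $V_i$; equivalently, $T_x X = \sum_{i=1}^p \left(\bigotimes_{j<i} \RR x_j\right) \otimes V_i \otimes \left(\bigotimes_{j>i} \RR x_j\right)$. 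First I would record this description carefully, noting that the summands overlap only in the line $\RR x$.

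Next, the critical point condition $v - x \perp T_x X$ unwinds into: for every $i$ and every $y_i \in V_i$,
\[
(v - x \mid x_1 \otimes \cdots \otimes y_i \otimes \cdots \otimes x_p) = 0.
\]
I would rewrite the left-hand side using the contraction notation introduced just before the lemma: contracting $v - x$ against $x_1 \otimes \cdots \otimes \widehat{x_i} \otimes \cdots \otimes x_p$ in all slots except the $i$-th yields a vector in $V_i$, and pairing that vector with $y_i$ gives the displayed inner product. Since $y_i$ is arbitrary, the condition is exactly that this contracted vector vanishes:
\[
\bigl(v - x \mid x_1 \otimes \cdots \otimes \widehat{x_i} \otimes \cdots \otimes x_p\bigr) = 0 \quad \text{in } V_i.
\]

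The final step is to separate the $v$-part from the $x$-part of this contraction. Contracting $v$ gives $(v \mid x_1 \otimes \cdots \otimes \widehat{x_i} \otimes \cdots \otimes x_p)$, which is the left-hand side of the claimed identity. Contracting $x = x_1 \otimes \cdots \otimes x_p$ against the same rank-one tensor $x_1 \otimes \cdots \otimes \widehat{x_i} \otimes \cdots \otimes x_p$ factors slot-by-slot: in each slot $j \neq i$ we get the scalar $(x_j \mid x_j)$, and the $i$-th slot is untouched, leaving the vector $\bigl(\prod_{j \neq i}(x_j \mid x_j)\bigr)\, x_i$. Equating the two contractions yields precisely
\[
\bigl(v \mid x_1 \otimes \cdots \otimes \widehat{x_i} \otimes \cdots \otimes x_p\bigr) = \Bigl(\prod_{j \neq i}(x_j \mid x_j)\Bigr)\, x_i,
\]
for each $i$, and conversely these equations force $v - x \perp T_x X$, so the equivalence holds both ways. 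The only point requiring a little care is the claim that the listed tensors really span $T_x X$ (and that the parameterization is a submersion at points with all $x_i \neq 0$), which is a standard fact about the multilinear parameterization of the rank-one variety; I expect no genuine obstacle, just the bookkeeping of the multilinearity of the contraction pairing.
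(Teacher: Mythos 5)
Your proof is correct and follows essentially the same route as the paper's: identify $T_x X = \sum_i x_1 \otimes \cdots \otimes V_i \otimes \cdots \otimes x_p$, impose $v - x \perp T_x X$ slot by slot, and split the contraction of $v - x$ into the $v$-part (left-hand side of the identity) and the $x$-part (which factors into $\prod_{j \neq i}(x_j|x_j)\, x_i$). The paper phrases the orthogonality condition as vanishing of the directional derivative $-2(v - x \mid x_1 \otimes \cdots \otimes y \otimes \cdots \otimes x_p)$, but that is the same computation.
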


In words: pairing $v$ with the tensor product of the $x_j$ with $j
\neq i$ gives a well-defined scalar multiple of $x_i$, and this should
hold for all $i$.

\begin{proof}
The tangent space at $x$ to the manifold of rank-one tensors is
$\sum_{i=1}^p x_1 \otimes \cdots \otimes V_i \otimes \cdots \otimes
x_p$. Fixing $i$ and $y \in V_i$, the derivative of $d_v$ in the
direction $x_1 \otimes \cdots \otimes y \otimes \cdots \otimes x_p$
is
\[ -2 (v - x_1 \otimes \cdots \otimes x_p |
x_1 \otimes \cdots \otimes y \otimes \cdots \otimes x_p).
\]
Equating this to zero for all $y$ yields that
\[ (v | x_1 \otimes \cdots \otimes \hat{x_i} \otimes \cdots
\otimes x_p)=(x_1 \otimes \cdots \otimes x_p | x_1 \otimes \cdots
\otimes \hat{x_i} \otimes \cdots \otimes x_p) = \left(\prod_{j \neq i}
(x_j|x_j) \right) x_i, \]
as claimed.
\end{proof}

The lemma can also be read as follows: a rank-one tensor $x_1 \otimes
\cdots \otimes x_p$ is critical for $d_v$ if and only if
first, for each $i$
the contraction $(v|x_1 \otimes \cdots \otimes \hat{x_i} \otimes \cdots
\otimes x_p)$ is some scalar multiple of $x_i$, and second, $(v|x_1
\otimes \cdots \otimes x_p)$ equals $\prod_j (x_j|x_j)$.  From this
description it is clear that if $x_1 \otimes \cdots \otimes x_p$
merely satisfies the first condition, then some scalar multiple of it is
critical for $d_v$. Also, if a rank-one tensor $u$ is critical for $d_v$, then $t
u$ is critical for $d_{t v}$ for all $t \in \RR$.  These considerations
give rise to the following definition and proposition.

\begin{de}
Define $\Crit$ to be the subset of $V \times (\PP V_1 \times \cdots
\times \PP V_p)$ consisting of points $(v,([u_1],\ldots,[u_p]))$
for which all $2 \times 2$-determinants of the $\dim V_i \times 2$-matrix
$[(v|u_1 \otimes \cdots \otimes \hat{u_i} \otimes \cdots \otimes u_p)
\quad \mid \quad u_i ]$ vanish, for each $i=1,\ldots,p$.
\end{de}


\begin{prop}
The projection $\Crit \to \prod_i \PP V_i$ is a smooth sub-bundle of the
trivial bundle $V \times \prod_i \PP V_i$ over $\prod_i \PP V_i$ of rank
$N -(n_1+\cdots+n_p) + p$, while the fiber of the projection
$\pi_V:\Crit \to V$ over a tensor $v$ counts the number of critical points
of $d_v$ in the manifold of non-zero rank-one tensors.
\end{prop}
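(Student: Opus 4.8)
The plan is to establish the two claims separately. For the bundle assertion, I would fix a point $([u_1],\ldots,[u_p])$ of the base $B:=\prod_i\PP V_i$ and observe that, since each $u_i$ is nonzero, the vanishing of the $2\times 2$-minors of $[\,(v\mid u_1\otimes\cdots\otimes\hat u_i\otimes\cdots\otimes u_p)\ \mid\ u_i\,]$ is exactly the condition that the first column lie on the line $\RR u_i$; hence the set of $v$ with $(v,([u_1],\ldots,[u_p]))\in\Crit$ is the linear subspace
\[
\{v\in V \mid (v\mid u_1\otimes\cdots\otimes\hat u_i\otimes\cdots\otimes u_p)\in\RR u_i \text{ for all } i\}.
\]
To compute its dimension I would choose for each $i$ an orthonormal basis $e^{(i)}_1,\ldots,e^{(i)}_{n_i}$ of $V_i$ with $e^{(i)}_1$ spanning $\RR u_i$; then the $i$-th contraction of $v=\sum_\alpha v_\alpha\, e^{(1)}_{\alpha_1}\otimes\cdots\otimes e^{(p)}_{\alpha_p}$ is $\sum_k v_{1,\ldots,1,k,1,\ldots,1}\,e^{(i)}_k$ (with $k$ in slot $i$), so the condition for slot $i$ is the vanishing of the $n_i-1$ coordinates $v_{1,\ldots,1,k,1,\ldots,1}$ with $k\geq 2$. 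For distinct $i$ these coordinate sets are disjoint, so we impose $\sum_i(n_i-1)=n-p$ independent linear conditions and the fibre has dimension $N-(n-p)=N-(n_1+\cdots+n_p)+p$, independently of the base point.

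To promote "constant fibre dimension'' to "smooth sub-bundle'' I would phrase the contractions globally: over $B$ there is a quotient vector bundle $Q$ of rank $n-p$ with fibre $\bigoplus_i V_i/\RR u_i$ over $([u_1],\ldots,[u_p])$, and, locally over $B$ (after choosing nonvanishing local sections of the tautological line bundles to fix representatives $u_i$), the contractions assemble into a morphism $\Theta$ from the trivial bundle $V\times B$ to $Q$. The zero locus of $\Theta$ is exactly $\Crit$ and does not depend on the chosen representatives, so these local descriptions glue; the coordinate computation of the previous paragraph shows $\Theta$ is fibrewise surjective, hence of constant rank, so $\ker\Theta=\Crit$ is a smooth sub-bundle of $V\times B$ of rank $N-(n-p)$. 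Since $B$ is compact, $\pi_V$ is proper, and $\dim\Crit=(N-n+p)+(n-p)=N=\dim V$, whence the fibres of $\pi_V$ are finite outside a measure-zero set.

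For the second assertion I would invoke the reformulation of the Lemma already recorded in the text: $(v,([u_1],\ldots,[u_p]))\in\Crit$ precisely when $u_1\otimes\cdots\otimes u_p$ satisfies the "first condition'' that each contraction be a scalar multiple of the corresponding factor, and in that case a scalar multiple $x=c\,(u_1\otimes\cdots\otimes u_p)$ is critical for $d_v$ if and only if it also meets the "second condition'' $(v\mid x)=\prod_j(x_j\mid x_j)$; writing $x=(cu_1)\otimes u_2\otimes\cdots\otimes u_p$ this forces the unique value $c=(v\mid u_1\otimes\cdots\otimes u_p)/\prod_j(u_j\mid u_j)$, nonzero exactly when $(v\mid u_1\otimes\cdots\otimes u_p)\neq 0$. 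Thus the map $x_1\otimes\cdots\otimes x_p\mapsto([x_1],\ldots,[x_p])$ (well defined, as a nonzero rank-one tensor determines each factor line) sends the critical nonzero rank-one tensors of $d_v$ injectively into $\pi_V^{-1}(v)$, with image the locus $c\neq 0$, and is inverted there by $([u_1],\ldots,[u_p])\mapsto c\,(u_1\otimes\cdots\otimes u_p)$. Finally, the exceptional locus $c=0$—where all $p$ contractions of $v$ against the $u_j$, $j\neq i$, vanish identically—sits inside the image under $\pi_V$ of an incidence variety of dimension $N-1$ (for fixed $([u_1],\ldots,[u_p])$ the offending $v$ form a subspace of codimension $n-p+1$), so for $v$ outside a proper subvariety the fibre $\pi_V^{-1}(v)$ is in bijection with the set of critical points of $d_v$ on the manifold of nonzero rank-one tensors.

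The step I expect to be the main obstacle is the bookkeeping that makes the sub-bundle claim clean: the contraction $(v\mid u_1\otimes\cdots\otimes\hat u_i\otimes\cdots\otimes u_p)$ is only defined up to rescaling the representatives $u_j$, so "$\Theta$ is a morphism of vector bundles'' must be argued either by twisting with suitable line bundles or locally as above, and one must separately verify constant rank via the explicit coordinate computation. By comparison, the bijection in the second part is conceptually routine once the "first/second condition'' reformulation is in hand, the only subtlety being the negligible locus $c=0$.
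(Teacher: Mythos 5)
Your proof is correct and follows essentially the same route as the paper's, which identifies the fibre of $\Crit\to\prod_i\PP V_i$ over $\bu=([u_1],\ldots,[u_p])$ with the linear subspace
\[
W_\bu=\Bigl(\bigoplus_{i=1}^p u_1\otimes\cdots\otimes(u_i)^\perp\otimes\cdots\otimes u_p\Bigr)^\perp,
\]
observes that this has codimension $\sum_i(n_i-1)$ and varies smoothly with $\bu$, and declares the second statement clear. Your coordinate computation reproduces exactly this description of $W_\bu$; your extra step of packaging the contractions into a constant-rank bundle map $\Theta$ with kernel $\Crit$ is a more explicit justification of the paper's ``varies smoothly'' phrase, but is the same underlying observation. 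For the second claim, your unwinding of the first/second condition reformulation is again what the paper has in mind. One genuine refinement you make is to flag the exceptional locus where $(v\mid u_1\otimes\cdots\otimes u_p)=0$: for such $v$ the corresponding fibre point of $\pi_V$ yields only the cone point $0\notin X$, so the fibre strictly overcounts the critical points. You correctly argue this happens only for $v$ in a set of measure zero (the image of an incidence variety of dimension $N-1$), which is enough for the integral the paper actually uses; the paper's proposition is, strictly read, only true for generic $v$, and your proof is the more careful of the two on this point.
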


\begin{proof}
The second statement is clear from the above. For the first observe
that the fiber above $\bu=([u_1],\ldots,[u_p])$ equals $W_\bu \times
\{([u_1],\ldots,[u_p])\}$ where
\[ W_\bu=\left(\bigoplus_{i=1}^p u_1 \otimes \cdots \otimes (u_i)^\bot \otimes
\cdots \otimes u_p\right)^\bot \subseteq V. \]
This space varies smoothly with $\bu$ and has codimension $\sum_i
(n_i-1)$, whence the dimension formula.
\end{proof}

We want to compute the average fiber size of the projection $\Crit \to
V$. Here {\em average} depends on the choice of a measure on $V$, and
we take the Gaussian measure $\frac{1}{(2\pi)^{N/2}} e^{-||v||^2/2}
\dd v$, where $\dd v$ stands for ordinary Lebesgue measure obtained
from identifying $V$ with $\RR^N$ by a linear map that relates $(.|.)$
to the standard inner product on $\RR^N$.

\subsection{Parameterizing $\Crit$} \label{ssec:Parm}

To apply the double counting strategy from
Section~\ref{sec:DoubleCounting}, we
introduce a convenient parameterization of $\Crit$. Fix norm-$1$
vectors $e_i \in V_i,\ i=1,\ldots,p$, write $\be=(e_1,\ldots,e_p)$
and $[\be]:=([e_1],\ldots,[e_p])$, and define
\[ W:=W_{[\be]}=\left(\bigoplus_{i=1}^p e_1 \otimes \cdots \otimes
(e_i)^\bot \otimes \cdots \otimes e_p\right)^\bot. \]
We parameterize (an open subset of) $\PP V_i$ by the map $e_i^\perp
\to \PP V_i, u_i \mapsto [e_i+u_i]$. Write $U:=\prod_{i=1}^p
(e_i^\bot)$. For $\bu=(u_1,\ldots,u_p) \in U$ let $R_\bu$ denote a linear
isomorphism $W \to W_{[\be+\bu]}$, to be chosen later, but at least
smoothly varying with $\bu$ and perhaps defined outside some subvariety
of positive codimension.

Next define
\[ \phi: W \times U \to V,\ (w,\bu) \mapsto R_{\bu} w. \]
Then we have the following fundamental identity
\[
\frac{1}{(2\pi)^{N/2}}
\int_{V} (\# \pi_V^{-1}(v)) \cdot  e^{-\frac{||v||^2}{2}} \dd v\\
=
\frac{1}{(2\pi)^{N/2}}
\int_{W \times U}
	|\det J_{(w,\bu)} \phi| e^{-\frac{||R_\bu w||^2}{2}}
	\dd \bu\ \dd w,
\]
where $J_{(w,\bu)} \phi$ is the Jacobian of $\phi$ at
$(w,\bu)$, whose determinant is measured relative to the volume form on
$V$ coming from the inner product and the volume form on $W \times U$
coming from the inner products of the factors, which are interpreted
perpendicular to each other. The left-hand side is our desired quantity,
and our goal is to show that the right-hand side reduces to the formula in
Theorem~\ref{thm:Main}.

We choose $R_{\bu}$ to be the tensor product $R_{u_1}
\otimes \cdots \otimes R_{u_p}$, where $R_{u_i}$ is the element of
$\lieg{SO}(V_i)$ determined by the conditions that it maps $e_i$ to
a positive scalar multiple of $e_i+u_i$ and that it restricts to the
identity on $\langle e_i,u_i \rangle^\bot$; this map is unique for
non-zero $u_i \in e_i^\bot$. Indeed, we have
\begin{align*}
R_{u_i}
&= \left(I-e_i e_i^T-\frac{u_i}{||u_i||}
\frac{u_i^T}{||u_i||} \right)
+ \left( \frac{e_i + u_i}{\sqrt{1+||u_i||^2}} e_i^T
+ \frac{u_i - ||u_i||^2 e_i}{||u_i|| \sqrt{1+||u_i||^2}}
\frac{u_i^T}{||u_i||} \right)\\
&= \left(I-e_i e_i^T-\frac{u_i u_i^T}{||u_i||^2}
\right)
+ \left( \frac{e_i + u_i}{\sqrt{1+||u_i||^2}} e_i^T
+ \frac{u_i - ||u_i||^2 e_i}{\sqrt{1+||u_i||^2}}
\frac{u_i^T}{||u_i||^2} \right)
\end{align*}
where the first term is the orthogonal projection to $\langle e_i,u_i
\rangle^\bot$ and the second term is projection onto the plane $\langle
e_i,u_i \rangle$ followed by a suitable rotation there.  Two important
remarks concerning symmetries are in order. First, by construction of
$R_{u_i}$ we have
\begin{equation}\label{inv}
R_{u_i}^{-1}=R_{-u_i}.
\end{equation}
Second, for any element $g \in \SO(e_i^\perp) \subseteq
\SO(V_i)$ we have
\begin{equation}\label{inv2}
R_{g u_i}=g \circ R_{u_i} \circ g^{-1}.
\end{equation}
We now compute the
derivative at $u_i$ of the map $e_i^\perp \to \SO(V_i), u \mapsto R_u$ in the direction $v_i
\in e_i^\bot$. First, when $v_i$ is perpendicular to both $e_i$ and $u_i$, this
derivative equals
\begin{equation}\label{derv}
\frac{\partial R_{u_i}}{\partial v_i}=\frac{1}{\sqrt{1+||u_i||^2}} (v_i e_i^T - e_i v_i^T)
- \frac{\sqrt{1+||u_i||^2}-1}{||u_i||^2 \sqrt{1+||u_i||^2}}
(u_i v_i^T + v_i u_i^T).
\end{equation}
Second, when $v_i$ equals $u_i$, the derivative equals
\begin{equation}\label{deru}
\frac{\partial R_{u_i}}{\partial u_i}=\frac{1}{(1+||u_i||^2)^{3/2}} (-u_i u_i^T + u_i e_i^T - e_i
u_i^T - ||u_i||^2 e_i e_i^T ).
\end{equation}
For now, fix $(w,\bu) \in W \times U$. On the subspace $T_w W=W$ of
$T_{(w,\bu)} W \times U$ the Jacobian of $\phi$ is just the map $W \to V,
w \mapsto R_\bu w$. Hence relative to the orthogonal
decompositions $V=W^\perp \oplus W$ and $U \times W$, we have a block
decomposition
\[ R_{\bu}^{-1} J_{(w,\bu)} \phi = \begin{bmatrix} A_{(w,\bu)} & 0 \\ * &
I_{W} \end{bmatrix} \]
for a suitable matrix $A_{(w,\bu)}$. Note that this matrix has
size $(n-p) \times (n-p)$, which is the size of the determinant in
Theorem~\ref{thm:Main}. As $R_\bu$ is orthogonal with determinant $1$,
we have $\det J_{(w,\bu)} \phi=\det A_{(w,\bu)}$ and $||R_\bu w||=||w||$.
This yields the following proposition.

\begin{prop} \label{prop:intA}
The expected number of critical tank-one approximations to a standard
Gaussian tensor in $V$ is
\[
I:=\frac{1}{(2\pi)^{N/2}}
\int_W \int_U
	|\det A_{(w,\bu)}| e^{-\frac{||w||^2}{2}}
	\dd \bu\ \dd w.
\]
\end{prop}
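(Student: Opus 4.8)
The plan is to read the proposition off the fundamental identity already displayed above, after simplifying its right-hand integrand. That identity rewrites the quantity of interest,
\[
\frac{1}{(2\pi)^{N/2}}\int_{V} (\#\pi_V^{-1}(v))\, e^{-||v||^2/2}\,\dd v,
\]
which by the preceding proposition on $\Crit$ is exactly the expected number of critical rank-one approximations, as $\frac{1}{(2\pi)^{N/2}}\int_{W\times U}|\det J_{(w,\bu)}\phi|\, e^{-||R_\bu w||^2/2}\,\dd\bu\,\dd w$. So it suffices to show that $||R_\bu w||=||w||$ and that $|\det J_{(w,\bu)}\phi| = |\det A_{(w,\bu)}|$.

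The norm identity is immediate, since $R_\bu = R_{u_1}\otimes\cdots\otimes R_{u_p}$ is a tensor product of orthogonal maps and hence orthogonal. For the determinant, I would split $T_{(w,\bu)}(W\times U)=W\oplus U$ and $V = W^\perp\oplus W$ and inspect $R_\bu^{-1}J_{(w,\bu)}\phi$ block by block. Along the $W$-directions $\phi$ is just the linear map $w'\mapsto R_\bu w'$, so after pre-composing with $R_\bu^{-1}$ one obtains the inclusion $W\hookrightarrow W^\perp\oplus W$, i.e. the column $\begin{bmatrix}0\\I_W\end{bmatrix}$. Along the $U$-directions one differentiates $\bu\mapsto R_\bu$ using the formulas \eqref{derv} and \eqref{deru} (extended by linearity to an arbitrary direction of $e_i^\perp$), obtaining the linear map $\bu'\mapsto R_\bu^{-1}\,(D_\bu R)(\bu')\,w$ from $U$ to $V$; splitting its image along $W^\perp\oplus W$ defines the remaining two blocks, the $W^\perp$-component being by definition $A_{(w,\bu)}$. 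This yields the block-triangular matrix
\[
R_\bu^{-1}J_{(w,\bu)}\phi=\begin{bmatrix}A_{(w,\bu)} & 0\\ * & I_W\end{bmatrix},
\]
in which $A_{(w,\bu)}$ is genuinely square, of size $(n-p)\times(n-p)$, because $\dim U = n-p = \dim W^\perp$. Hence $\det(R_\bu^{-1}J_{(w,\bu)}\phi)=\det A_{(w,\bu)}$, and since $R_\bu$ is orthogonal with $\det R_\bu = \prod_i(\det R_{u_i})^{N/n_i}=1$ we conclude $\det J_{(w,\bu)}\phi=\det A_{(w,\bu)}$. Substituting both facts into the fundamental identity gives the asserted formula for $I$.

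I do not expect a real obstacle in this final step — its content lies entirely in the ingredients I am permitted to assume. The one place that genuinely needs care is the justification of the fundamental identity itself: that $(w,\bu)\mapsto(R_\bu w,[\be+\bu])$ is a diffeomorphism onto a subset of $\Crit$ whose complement has measure zero (using that $R_{u_i}$ maps $e_i^\perp$ isometrically onto $(e_i+u_i)^\perp$, so that $R_\bu$ carries $W$ isomorphically onto $W_{[\be+\bu]}$, and that the chart $u_i\mapsto[e_i+u_i]$ exhausts $\PP V_i$ away from a hyperplane), together with the bookkeeping that makes the change-of-variables factor exactly $(2\pi)^{-N/2}$ relative to the product of the inner-product volume forms on $W$ and on $U$. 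Once that is granted the proposition is immediate; the real work ahead is the explicit computation of $\det A_{(w,\bu)}$ that will be needed to reach Theorem~\ref{thm:Main}.
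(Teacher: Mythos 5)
Your argument is essentially the same as the paper's: both read the proposition off the fundamental identity, using that $R_\bu$ is orthogonal with determinant $1$ (hence $||R_\bu w||=||w||$), and the block-triangular shape of $R_\bu^{-1}J_{(w,\bu)}\phi$ relative to $V=W^\perp\oplus W$ (hence $|\det J_{(w,\bu)}\phi|=|\det A_{(w,\bu)}|$). The extra bookkeeping you add --- the Kronecker-determinant formula, the dimension count $\dim U=n-p=\dim W^\perp$, and your remark that the a.e.\ diffeomorphism claim behind the fundamental identity is the part deserving care --- is all correct but matches the role those facts implicitly play in the paper.
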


For later use, consider the function $F:U \to \RR$ defined as
\[ F(u)=\frac{1}{(2\pi)^{N/2}}\int_W |\det A_{(w,\bu)}| e^{-\frac{||w||^2}{2}} \dd w. \]
From \eqref{inv2} and the fact that the Gaussian density on $W$ is
orthogonally invariant, it follows that $F$ is invariant under the group
$\prod_{i=1}^p \SO(e_i^\perp)$. In particular, its value depends only
on the tuple $(||u_1||,\ldots,||u_p||)=:(t_1,\ldots,t_p)$. This will be used in the
following subsection.

\subsection{The shape of $A_{(w,\bu)}$}
Recall that $U=\prod_{i=1}^p (e_i^\perp)$. Correspondingly, the columns
of the matrix $A_{(w,\bu)}$ come in $p$ blocks, one for each $e_i^\perp$.
The $i$-th block records the $W^\bot$-components of the vectors
$\left(R_{\bu}^{-1}\frac{\partial R_{\bu}}{\partial
\bv_i}\right)w$, where $\bv_i=(0,\ldots,v_i,\ldots,0)$ and $v_i$ runs
through an orthonormal basis $e_i^{(1)},\ldots,e_i^{(n_i-1)}$ of
$e_i^\perp$. We have
\begin{equation}\label{Ru}
R_{\bu}^{-1}\frac{\partial R_{\bu}}{\partial \bv_i}=\mathrm{Id}\otimes
\cdots \otimes R_{u_i}^{-1}\frac{\partial R_{u_i}}{\partial v_i}\otimes
\cdots \otimes \mathrm{Id}.
\end{equation}
Furthermore, if $v_i$ is also perpendicular to $u_i$, then by \ref{derv} and \ref{inv}
\begin{equation}\label{formperp}
R_{u_i}^{-1}\frac{\partial R_{u_i}}{\partial
v_i}=\frac{1}{\sqrt{1+||
u_i||^2}}(v_ie_i^T-e_iv_i^T)+\frac{1-\sqrt{1+||u_i||^2}}{||u_i||^2\sqrt{1+||u_i||^2}}(v_iu_i^T-u_iv_i^T).
\end{equation}
On the other hand, when $v_i$ is parallel to $u_i$, then
\begin{equation}\label{formpara}
R_{u_i}^{-1}\frac{\partial R_{u_i}}{\partial
v_i}=\frac{1}{1+||u_i||^2}(v_ie_i^T-e_iv_i^T).
\end{equation}
This is derived from \eqref{inv} and \eqref{deru}, keeping in mind the fact
that here $v_i$ needs not be {\em equal} to $u_i$, but merely {\em
parallel} to it. Note that both matrices are skew-symmetric. This is no
coincidence: the directional derivative $\partial R_{u_i}/\partial v_i$
lies in the tangent space to $\SO(V_i)$ at $u_i$, and left multiplying
by $R_{u_i}^{-1}$ maps these elements into the Lie algebra of $\SO(V_i)$,
which consists of skew symmetric matrices.

We decompose the space $W$ as
\begin{align*}
W=&\left(\bigoplus_{i=1}^p e_1 \otimes \cdots \otimes
(e_i)^\bot \otimes \cdots \otimes e_p\right)^\bot
=\mathbb{R}\cdot e_1\otimes e_2\otimes\cdots\otimes
e_p\\ &\oplus \left(\bigoplus_{1\leq i<j\leq p} e_1\otimes\cdots\otimes
e_i^\bot\otimes\cdots\otimes e_j^\bot\otimes\cdots\otimes e_p\right) \oplus W'
=:W_0 \oplus W',
\end{align*}
where $W'$ contains the summands that contain at least three $e_i^\perp$-s
as factors. From \eqref{Ru} it follows that $R_{\bu}^{-1}\frac{\partial
R_{\bu}}{\partial \bv_i}W' \subseteq W$. So for a general $w$
we use the parameters
\[
w=w_0\cdot e_1\otimes\cdots\otimes e_p+\sum_{1\leq i<j\leq p} \sum_{1\leq
a\leq n_i-1} \sum_{1\leq b\leq n_j-1} w_{i,j}^{a,b}
e_1\otimes\cdots\otimes e_i^{(a)}\otimes\cdots\otimes
e_j^{(b)}\otimes\cdots\otimes e_p+ w',
\]
where $w_0$ and $w_{i,j}^{a,b}$ are real numbers, and where $w'
\in W'$ will not contribute to $A_{(w,\bu)}$. We also write
$w_1=(w_0,(w_{i,j}^{a,b}))$ for the components of $w$ that do contribute.

As a further simplification, we take each $u_i$ equal to a
scalar $t_i \geq 0$ times the first basis vector $e_i^{(1)}$
of $e_i^\perp$. This is justified by the observation that the function $F$
is invariant under the group $\prod_i \SO(e_i^\perp)$. Thus we want to determine
$A_{\left(w,(t_1e_1^{(1)},t_2e_2^{(1)},\ldots,t_pe_p^{(1)})\right)}$.
This matrix has a natural block structure $(B_{i,j})_{1\leq
i,j\leq p}$, where $B_{i,j}$ is the part of the Jacobian containing the
$e_1\otimes \cdots \otimes e_i^\bot\otimes\cdots \otimes e_p$-coordinates
of $\left(R_{\bu}^{-1}\frac{\partial R_{\bu}}{\partial
\bv_j}\right)w$ with $\bv_j=(0,\ldots,v_j,\ldots,0)$.

Fixing $i<j$, the matrix $B_{i,j}$ is of type $(n_i-1)\times (n_j-1)$,
where the $(a,b)$-th element is the $e_1\otimes\cdots\otimes
e_i^{(a)}\otimes\cdots\otimes e_p$-coordinate of
\[\left(R_{u_j}^{-1}\frac{\partial R_{u_j}}{\partial e_j^{(b)}}\right)
w.\]
First, if $b\neq1$, then we have a directional derivative
in a direction perpendicular to $u_j=t_j e_j^{(1)}$. Applying formula
\ref{formperp} for the directions $e_j^{(b)}$ yields
\[
B_{i,j}(a,b)= \frac{-w_{i,j}^{a,b}}{\sqrt{1+t_j^2}}.
\]
Second, if $b=1$, then we consider directional derivatives parallel to $u_j$, so applying formula \ref{formpara} for direction $e_j^{(1)}$, we get
\[
B_{i,j}(a,1)=\frac{-w_{i,j}^{a,1}}{1+t_j^2}.
\]
Putting all together, the matrix $B_{i,j}$ is as follows
\[
B_{i,j}=\left(\frac{1}{1+t_j^2}C_{i,j}^1, \frac{1}{\sqrt{1+t_j^2}}
C_{i,j}^2,\ldots,\frac{1}{\sqrt{1+t_j^2}} C_{i,j}^{n_j-1}\right),
\] where $C_{i,j}^b=\left(-w_{i,j}^{a,b}\right)_{1\leq a\leq n_i-1}$ are column vectors for all $1\leq b\leq n_j-1$. Denote the matrix consisting of these column vectors by $C_{i,j}$.
Doing the same calculations but now for the matrix $B_{j,i}$, and
writing $C_{j,i}=C_{i,j}^T$, we find that
\[
B_{j,i}=\left(\frac{1}{1+t_i^2}C_{j,i}^1, \frac{1}{\sqrt{1+t_i^2}}
C_{j,i}^2,\ldots,\frac{1}{\sqrt{1+t_i^2}} C_{j,i}^{n_i-1}\right).
\]
The only remaining case is when $i=j$, and then similar
calculations yield that $B_{j,j}=\frac{1}{(1+t_j^2)^{\frac{n_j}{2}}} w_0 I_{n_j-1}$. We summarize the
content of this subsection as follows.

\begin{prop}\label{form}
For $(w,\bu) \in W \times U$ with $\bu=(t_1e_1^{(1)},\ldots,t_pe_p^{(1)})$
we have
\[
\det A_{(w,\bu)}=\prod_{k=1}^{p}\frac{1}{(1+t_k^2)^{\frac{n_k}{2}}}\det \left(
                                                                \begin{array}{cccc}
                                                                  C_1 &
								  C_{1,2}
								  & \cdots & C_{1,p} \\
                                                                  C_{1,2}^T
								  & C_2 &
								  \cdots& C_{2,p} \\
                                                                  \vdots & \vdots &  & \vdots \\
                                                                  C_{1,p}^T
								  &
								  C_{2,p}^T
								  & \cdots & C_p \\
                                                                \end{array}
                                                              \right),
\]
where $C_{i,j}=\left(-w_{i,j}^{a,b}\right)_{a,b}$ and
$C_j=w_0 I_{n_j-1}$ for all $1\leq i<j\leq p$.
\end{prop}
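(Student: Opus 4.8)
The plan is to assemble the block computations of this subsection into a single matrix factorization of $A_{(w,\bu)}$ and then invoke multiplicativity of the determinant.

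For each $j$, introduce the diagonal ``column--scaling'' matrix $D_j:=\diag\!\bigl(\tfrac{1}{1+t_j^2},\tfrac{1}{\sqrt{1+t_j^2}},\dots,\tfrac{1}{\sqrt{1+t_j^2}}\bigr)$ of size $(n_j-1)\times(n_j-1)$, whose first diagonal entry (corresponding to the direction $e_j^{(1)}$ parallel to $u_j=t_je_j^{(1)}$) is $\tfrac{1}{1+t_j^2}$ and whose remaining $n_j-2$ entries (corresponding to the directions perpendicular to $u_j$) are $\tfrac{1}{\sqrt{1+t_j^2}}$. The first step is to observe that the formulas for $B_{i,j}$, $B_{j,i}$ and $B_{j,j}$ obtained above can all be written in the uniform shape: the $(i,j)$-block of $A_{(w,\bu)}$ equals $C_{i,j}\,D_j$, where we put $C_{j,j}:=w_0I_{n_j-1}$, take $C_{i,j}=(-w_{i,j}^{a,b})_{a,b}$ for $i<j$, and set $C_{j,i}:=C_{i,j}^T$. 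For $i<j$ this is the displayed formula for $B_{i,j}$; for $i>j$ it is the formula for $B_{j,i}$ with $j$ now labelling the block column; and for $i=j$ one checks that the only part of $w$ contributing to the $(j,j)$-block is $w_0\,e_1\otimes\cdots\otimes e_p$ (a tensor of $W$ whose only non-$e_k$ factor sits in slot $j$ must have no non-$e_k$ factor at all, hence is a scalar multiple of $e_1\otimes\cdots\otimes e_p$), and then $R_{u_j}^{-1}(\partial R_{u_j}/\partial e_j^{(b)})e_j=(D_j)_{bb}\,e_j^{(b)}$ by \eqref{formperp} and \eqref{formpara}, so that $B_{j,j}=w_0D_j$.

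Combining the block columns gives $A_{(w,\bu)}=M\cdot\diag(D_1,\dots,D_p)$, where $M$ is precisely the symmetric block matrix displayed in the statement. Therefore $\det A_{(w,\bu)}=\det M\cdot\prod_{j=1}^p\det D_j$, and $\det D_j=\tfrac{1}{1+t_j^2}\cdot\bigl(\tfrac{1}{\sqrt{1+t_j^2}}\bigr)^{n_j-2}=(1+t_j^2)^{-n_j/2}$, which is exactly the asserted scalar factor.

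I do not expect a serious obstacle: the genuinely laborious work — differentiating $u\mapsto R_u$ and sorting the components of $w$ into the block positions of $A_{(w,\bu)}$ — has already been carried out. The one point that needs care is the index bookkeeping, namely verifying that all three cases $i<j$, $i>j$, $i=j$ really do reorganize into the single form ``($j$-th block column of $M$) right-multiplied by $D_j$'', so that the scalings can be extracted one block column at a time rather than one block at a time; once that is in place the determinant identity is immediate.
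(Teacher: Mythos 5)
Your proof is correct and it is, in essence, the argument the paper has in mind: the preceding subsection computes the blocks $B_{i,j}$, and the proposition is obtained by observing that the $j$-th block column of $A_{(w,\bu)}$ is the $j$-th block column of the symmetric matrix $C(w_1)$ right-multiplied by the diagonal matrix $D_j=\diag\bigl(\tfrac{1}{1+t_j^2},\tfrac{1}{\sqrt{1+t_j^2}},\dots,\tfrac{1}{\sqrt{1+t_j^2}}\bigr)$, whence $A_{(w,\bu)}=C(w_1)\,\diag(D_1,\dots,D_p)$ and $\det D_j=(1+t_j^2)^{-n_j/2}$. One small bonus: your treatment of the diagonal block is actually more careful than the text's. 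The paper records $B_{j,j}=\tfrac{1}{(1+t_j^2)^{n_j/2}}w_0 I_{n_j-1}$, but the same reasoning you spell out --- the only part of $w\in W$ contributing is $w_0\,e_1\otimes\cdots\otimes e_p$, and $R_{u_j}^{-1}(\partial R_{u_j}/\partial e_j^{(b)})e_j=(D_j)_{bb}\,e_j^{(b)}$ by \eqref{formperp} and \eqref{formpara} --- gives $B_{j,j}=w_0 D_j$, which coincides with the paper's expression only when $n_j=2$ but has the correct determinant $w_0^{n_j-1}(1+t_j^2)^{-n_j/2}$ in general. The proposition's conclusion is unaffected, and your writeup produces it cleanly.
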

For further reference we denote the above matrix $(C_{i,j})_{1\leq i,j\leq
p}$ by $C(w_1)$.

\subsection{The value of $I$}
We are now in a position to prove our formula for the expected number of
critical rank-one approximations to a Gaussian tensor $v$.

\begin{proof}[Proof of Theorem~\ref{thm:Main}.]
Combine Propositions~\ref{prop:intA} and~\ref{form} into the
expression
\[
I=\frac{1}{(2\pi)^{\frac{N}{2}}}\prod_{k=1}^p
\mathrm{Vol}(S^{n_k-2})\int\limits_{W}\int\limits_{0}^{\infty}\cdots\int\limits_{0}^{\infty}\prod_{i=1}^p\frac{t_i^{n_i-2}}{(1+t_i^2)^{\frac{n_i}{2}}}\left|\det
C(w_1)\right|e^{-\frac{||w||^2}{2}}\dd t_1\cdots\dd t_p\dd w.
\]
Here the factors $t_i^{n_i-2}$ and the volumes of the sphere account for
the fact that $F$ is orthogonally invariant and $\dd u_i=t_i^{n_i-2}
d_t d_S$, where $d_S$ is the surface element of the $(n_i-2)$-dimensional
unit sphere in $e_i^\perp$. Now recall that
\[
\int\limits_0^{\infty}\frac{t^{n_i-2}}{(1+t^2)^{\frac{n_i}{2}}}\dd
t=\frac{\sqrt{\pi}}{2}\frac{\Gamma(\frac{n_i-1}{2})}{\Gamma(\frac{n_i}{2})},
\] and that the volume of the $(n-2)$-sphere is
\[
\mathrm{Vol}(S^{n_i-2})=\frac{2\pi^{\frac{n_i-1}{2}}}{\Gamma(\frac{n_i-1}{2})}.
\]
Plugging in the above two formulas, we obtain
\[
I=\frac{\sqrt{\pi}^{n}}{\sqrt{2\pi}^{N}}\frac{1}{\prod_{i=1}^p\Gamma\left(\frac{n_i}{2}\right)}\int\limits_W \left|\det C(w)\right|e^{-\frac{||w||^2}{2}}\dd w.
\]
Now the integral splits as an integral over $W_1$ and one over $W'$:
\begin{align*}
&\int\limits_W \left|\det C(w)\right|e^{-\frac{||w||^2}{2}}\dd
=\int\limits_{W'}e^{-\frac{||w'||^2}{2}}\dd w'
\int\limits_{W_1}
\left|\det C(w_1)\right|e^{-\frac{||w_1||^2}{2}}\dd w_1\\
&=\sqrt{2\pi}^{\mathrm{dim}W}\left(\frac{1}{\sqrt{2\pi}^{\mathrm{dim}W_1}}\int\limits_{W_1}
\left|\det C(w_1)\right|e^{-\frac{||w_1||^2}{2}}\dd w_1\right)\\
&=\sqrt{2\pi}^{N-(n-p)} \mathbb{E}(|\det C(w_1)|)
\end{align*}
where $w_1$ is drawn from a standard Gaussian distribution on $W_1$.
Inserting this in the expression for $I$ yields the expression for $I$ in
Theorem~\ref{thm:Main}.
\end{proof}

\subsection{The matrix case}
In this section we perform a sanity check, namely, we show that our
formula in Theorem~\ref{thm:Main} gives the correct answer for the case
$p=2$ and $n_1=n_2=n$---which is $n$, the number of singular values of
any sufficiently general matrix. In this special case we compute
\begin{align*}
J:&=\int\limits_{W}\left|\det C(w)\right| \dd \mu_{W}=\int \limits_{-\infty}^{\infty}\int \limits_{\mathrm{M}_{n-1}} \left|\det\left(
                                                     \begin{array}{cc}
                                                       w_0 I_{n-1} & B \\
                                                       B^T & w_0 I_{n-1} \\
                                                     \end{array}
                                                   \right)\right|e^{\frac{||w_0^2||}{2}}
						   \dd \mu_B \dd w_0=\\
&=\int \limits_{-\infty}^{\infty}\int \limits_{\mathrm{M}_{n-1}}\left|\det(w_0^2 I_{n-1}-BB^T)\right|e^{\frac{||w_0||^2}{2}} \dd \mu_B \dd w_0,
\end{align*}
where $B\in \mathrm{M}_{n-1}(\mathbb{R})$ is a real $(n-1)\times (n-1)$
matrix. The matrix $A:=BB^T$ is a symmetric positive definite matrix and
since the entries of $B$ are independent and normally distributed, $A$
is drawn from the Wishart distribution with density $W(A)$ on the cone
of real symmetric positive definite matrices \cite[Section 2.1]{Rouault07}.
Denote this space by $\Sym_{n-1}$. So the integral we want to calculate is
\[
J=\int \limits_{-\infty}^{\infty}\int \limits_{\Sym_{n-1}}\left|\det(w_0^2 I_{n-1}-A)\right|e^{\frac{||w_0||^2}{2}} \dd W(A) \dd w_0.
\] Now by \cite[Part 2.2.1]{Rouault07} the joint probability density of the eigenvalues $\lambda_j$ of $A$ on the orthant $\lambda_j>0$ is
\begin{equation}\label{dist}
\frac{1}{Z(n-1)}\prod_{j=1}^{n-1}\frac{e^{\frac{-\lambda_j}{2}}}{\sqrt{\lambda_j}}\prod_{1\leq j<k<n-1}|\lambda_k-\lambda_j|,
\end{equation}
where the normalizing constant is
\[
Z(n-1)=\sqrt{2}^{(n-1)^2}\left(\frac{2}{\sqrt{\pi}}\right)^{n-1}\prod_{j=1}^{n-1}\Gamma\left(1+\frac{j}{2}\right)\Gamma\left(\frac{n-j}{2}\right).
\]
Using this fact we obtain
\[
J=\frac{1}{Z(n-1)}\int \limits_{\mathbb{R}}\int \limits_{\lambda >0}\prod_{j=1}^{n-1}\frac{e^{\frac{-\lambda_j}{2}}}{\sqrt{\lambda_j}}\prod_{1\leq j<k<n-1}|\lambda_k-\lambda_j| \prod_{j=1}^{n-1}|w_0^2-\lambda_j|e^{\frac{||w_0||^2}{2}}\dd \lambda \dd w_0.
\]
Now making the change of variables $w_0^2=\lambda_n$, so that
\[
J=2\frac{Z(n)}{Z(n-1)}.
\] Plugging in the remaining normalizing constants we find that the expected number of
critical rank-one approximations to an $n\times n$-matrix is
\[
I=\frac{\sqrt{\pi}^{2n}}{\sqrt{2\pi}^{n^2}}\Gamma\left(\frac{n}{2}\right)^{-2}2\frac{Z(n)}{Z(n-1)}=n.
\]

\section{Symmetric tensors} \label{sec:SymTensors}

\subsection{Set-up}
Now we turn our attention from arbitrary tensors to symmetric tensors,
or, equivalently, homogeneous polynomials. For this, consider $\mathbb{R}^n$
with the standard orthonormal basis $e_1,e_2,\ldots, e_n$ and let
$V=S^p\mathbb{R}^n$ be the space of homogeneous polynomials of degree
$p$ in $n$ variables $e_1,e_2,\ldots, e_n.$  Recall that, up to a
positive scalar, $V$ has a unique inner product that is preserved by
the orthogonal group $\OO_n$ in its natural action on polynomials in
$e_1,\ldots,e_n$. This inner product, sometimes called the {\em Bombieri inner
product}, makes the monomials $e^\sigma:=\prod_i
e_i^{\alpha_i}$ (with $\sigma \in \ZZ_{\geq 0}^n$
and $\sum_i \sigma_i=p$, which we will abbreviate to $\sigma
\vdash p$) into an orthogonal basis with square norms
\[ (e^\sigma|e^\sigma)=\frac{\sigma_1! \cdots
\sigma_n!}{p!}=:\binom{p}{\sigma}^{-1}. \]
The scaling ensures that that the squared norm of a pure power
$(t_1e_1+\ldots+t_n e_n)^p$ equals $(\sum_i t_i^2)^p$. The
scaled monomials
\[f_{\sigma}:=\sqrt{\binom{p}{\sigma}} e^\sigma \]
form an orthonormal basis of $V$, and we equip $V$ with the standard
Gaussian distribution relative to this orthonormal basis.

Now our variety $X$ can be defined by the parameterization
\begin{align*}
\psi:\mathbb{R}^n &\to S^p\mathbb{R}^n,\\
t &\mapsto t^p=\sum_{\sigma\vdash p}
t_1^{\sigma_1} \cdots t_n^{\sigma_n} \sqrt{\binom{p}{\sigma}} f_{\sigma}.
\end{align*}
In fact, if $p$ is odd, then this parameterization is one-to-one, and
$X=\im \psi$. If $p$ is even, then this parameterization is two-to-one,
and $X=\im \psi \cup (-\im \psi)$.

\begin{de}
Define $\Crit$ to be the subset of $V \times X$ consisting of all pairs
of $(v,x)$ such that $v-x \perp T_{x} X$.
\end{de}

\subsection{Parameterizing $\Crit$}
We derive a convenient parameterization of $\Crit$, as follows. Taking
the derivative of $\psi$ at $t \neq 0$, we find that $T_{\pm t^p} X$
both equal $t^{p-1} \cdot \RR^n$. In particular, for $t$ any non-zero
scalar multiple of $e_1$, this tangent space is spanned by all monomials
that contain at least $(n-1)$ factors $e_1$. Let $W$ denote the orthogonal
complement of this space, which is spanned by all monomials that contain
at most $(p-2)$ factors $e_1$. For $u \in e_1^\perp \setminus\{0\}$, recall
from Subsection~\ref{ssec:Parm} the orthogonal map $R_u \in \SO_n$ that
is the identity on $\langle e_1,u \rangle^\perp$ and a rotation sending
$e_1$ to a scalar multiple of $e_1+u$ on $\langle e_1,u \rangle$. We write
$S^p R_u$ for the induced linear map on $V$, which, in particular, sends
$e_1^p$ to $(e_1+u)^p$. We have the following
parameterization of $\Crit$:
\begin{align*}
e_1^\perp \times \RR e_1^p \times W &\to \Crit,\\
(u,w_0 e_1^p,w) &\mapsto (w_0 S^p R_u e_1^p, w_0 S^p R_u e_1^p +
S^p R_u w).
\end{align*}
Combining with the projection to $V$, we obtain the map
\[ \phi:e_1^\perp \times \RR e_1^p \times W \to V, \
(u,w_0 e_1^p,w) \mapsto S^p R_u(w_0 e_1^p + w). \]
Following the strategy in Section~\ref{sec:DoubleCounting}, the expected number
of critical points of $d_v$ on $X$ for a Gaussian $v$ equals
\[ I:=\frac{1}{(2\pi)^{\dim V/2}}
\int_{e_1^\perp} \int_{-\infty}^\infty \int_W
|\det J_{(u,w_0,w)} \phi| e^{-(w_0^2+||w||^2)/2} \dd w \dd
w_0 \dd u, \]
where we have used that $S^p R_u$ preserves the norm, and that $w
\perp e_1^p$.

To determine the Jacobian determinant, we observe that $J_{(u,w_0,w)} \phi$
restricted to $T_{w_0 e_1^p} \RR e_1^p \oplus T_w W$ is just the linear map $S^p
R_u$. Hence, relative to a block decomposition $V=(W+\RR e_1^p)^\perp
\oplus \RR e_1^p \oplus W$ we find
\[
S^p(R_u)^{-1} J_{(u,w_0,w)}\phi=
\left[
\begin{array}{c:c:c}
    A_{(u,w_0,w)} & 0 & 0\\ \hdashline
    \ast & 1& 0\\ \hdashline
    \ast & 0 & I\\
\end{array}
\right]
\]
for a suitable linear map $A_{(u,w_0,w)}:e_1^\perp \to (W \oplus \RR e_1^p)^\perp$.

\subsection{The shape of $A_{(u,w_0,w)}$}

For the computations that follow, we will need only part of our
orthonormal basis of $V$, namely, $e_1^p$ and the vectors
\begin{align*}
f_i&:=\sqrt{p} e_1^{p-1} e_i \\
f_{ii}&:=\sqrt{p(p-1)/2} e_1^{p-2} e_i^2\\
f_{ij}&:=\sqrt{p(p-1)} e_1^{p-2} e_i e_j
\end{align*}
where $2 \leq i \leq n$ in the first two cases and $2 \leq i < j \leq n$
in the last case. The target space of $A_{(u,w_0,w)}$ has an orthonormal
basis $f_2,\ldots,f_n$, while the domain has an orthonormal basis
$e_2,\ldots,e_n$. Let $a_{kl}$ be the coefficient of
$f_k$ in $A_{(u,w_0,w)} e_l$. To compute $a_{kl}$, we expand $w$ as
\[
w=
\sum_{2 \leq i \leq j} w_{ij} f_{ij} + w' =: w_1 + w'
\]
where $w'$ contains the terms with at most $p-3$ factors
$e_1$. We have the identity
\[ S^p (R_u)^{-1} \frac{\partial S^p R_u (e_{i_1} \cdots
e_{i_p})}{\partial e_l} =
\sum_{m=1}^p e_{i_1} \cdots (R_u^{-1} \frac{\partial
R_u}{\partial e_l} e_{i_m}) \cdots  e_{i_p}. \]
For this expression to contain terms that are multiples of some $f_k$,
we need that at least $p-2$ of the $i_m$ are equal to $1$. Thus $a_{kl}$
is independent of $w'$, which is why we need only the basis vectors above.

As in the case of ordinary tensors, we make the further simplification
that $u=t e_2$. Then we have to distinguish two cases: $l=2$ and $l>2$.
For $l=2$ formula \eqref{formpara} applies, and we compute modulo $\langle
f_2,\ldots,f_n \rangle^\perp$
\begin{align*}
&(S^p R_{te_2})^{-1} \frac{\partial (S^p R_{te_2} (w_0 e_1^p +
w_1))}{\partial e_2}\\ &=
(S^p R_{te_2})^{-1} \frac{\partial (S^p R_{te_2} (w_0 e_1^p +
\sum_{2 \leq i} w_{ii} f_{ii} +
\sum_{2 \leq i<j} w_{ij} f_{ij}))}{\partial e_2}\\
&=
\frac{1}{1+t^2}(
p w_0 e_1^{p-1} e_2
- 2 w_{22} \sqrt{p(p-1)/2} e_1^{p-1} e_2
- \sum_{2<j} w_{2j} \sqrt{p(p-1)} e_1^{p-1} e_j)\\
&=
\frac{1}{1+t^2}(
(\sqrt{p} w_0 - \sqrt{2(p-1)} w_{22}) f_{2}
- \sum_{2<j} \sqrt{p-1} w_{2j} f_j ).
\end{align*}
For $l>2$ formula \eqref{formperp} applies, but in fact the second
term never contributes when we compute modulo $\langle f_2,\ldots,f_n
\rangle^\perp$:
\begin{align*}
&(S^p R_{te_2})^{-1} \frac{\partial (S^p R_{te_2} (w_0 e_1^p +
w_1))}{\partial e_l}\\
&=
(S^p R_{te_2})^{-1} \frac{\partial (S^p R_{te_2} (w_0 e_1^p +
\sum_{2 \leq i} w_{ii} f_{ii} +
\sum_{2 \leq i<j} w_{ij} f_{ij}))}{\partial e_l}\\
&=
\frac{1}{\sqrt{1+t^2}}
\left(p w_0 e_1^{p-1} e_l
- 2 w_{ll} \sqrt{p(p-1)/2} e_1^{p-1} e_l \right.\\
&- \left. \sqrt{p(p-1)}(\sum_{2 \leq i<l} w_{il} e_1^{p-1} e_i
+ \sum_{l < j} w_{2j} e_1^{p-1} e_j)\right)\\
&=\frac{1}{\sqrt{1+t^2}}\left( (\sqrt{p}w_0 -
\sqrt{2(p-1)}w_{ll}) f_l - \sum_{i \neq l} \sqrt{p-1} w_{il}
f_i \right);
\end{align*}
here we use the convention that $w_{il}=w_{li}$ if $i>l$. We
have thus proved the following proposition.

\begin{prop}
The determinant of $A_{(te_2,w_0,w)}$ equals
\[
\frac{1}{(1+t^2)^{n/2}} \det \left(\sqrt{p}w_0 I
- \sqrt{p-1}
\cdot
\begin{bmatrix}
\sqrt{2} w_{22} & w_{23} & \cdots & w_{2n}\\
w_{23} & \sqrt{2} w_{33} & \cdots & w_{3n}\\
\vdots & \vdots & & \vdots \\
w_{2n} & w_{3n} & \cdots & \sqrt{2} w_{nn}
\end{bmatrix}\right).
\]
\end{prop}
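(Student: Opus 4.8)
The plan is to read off the entries of $A_{(te_2,w_0,w)}$ directly from the two multi-line computations preceding the proposition and then to clear the scalar factors. Recall that $a_{kl}$ denotes the coefficient of $f_k$ in $A_{(te_2,w_0,w)}e_l$, with $k,l$ ranging over $\{2,\ldots,n\}$, so that the identity computing $(S^p R_{te_2})^{-1}\,\partial(S^p R_{te_2}(w_0 e_1^p + w))/\partial e_l$ modulo $\langle f_2,\ldots,f_n\rangle^\perp$ is exactly the $l$-th column of $A_{(te_2,w_0,w)}$ written in the orthonormal basis $f_2,\ldots,f_n$. From the first computation ($l=2$) we read $a_{22} = \frac{1}{1+t^2}(\sqrt p\, w_0 - \sqrt{2(p-1)}\,w_{22})$ and $a_{j2} = -\frac{1}{1+t^2}\sqrt{p-1}\,w_{2j}$ for $j>2$; from the second computation ($l>2$) we read $a_{ll} = \frac{1}{\sqrt{1+t^2}}(\sqrt p\, w_0 - \sqrt{2(p-1)}\,w_{ll})$ and $a_{il} = -\frac{1}{\sqrt{1+t^2}}\sqrt{p-1}\,w_{il}$ for $i\neq l$, using the convention $w_{il}=w_{li}$ when $i>l$.

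I would then pull the column prefactors out of the determinant by multilinearity: column $2$ carries the factor $(1+t^2)^{-1}$, while each of the remaining $n-2$ columns carries the factor $(1+t^2)^{-1/2}$, so the total scalar pulled out is $(1+t^2)^{-1}(1+t^2)^{-(n-2)/2} = (1+t^2)^{-n/2}$, which is precisely the prefactor in the statement. What is left inside the determinant is the $(n-1)\times(n-1)$ matrix whose $(l,l)$ entry is $\sqrt p\, w_0 - \sqrt{2(p-1)}\,w_{ll}$ and whose $(i,l)$ entry with $i\neq l$ is $-\sqrt{p-1}\,w_{il}$. Writing $\sqrt{2(p-1)} = \sqrt{p-1}\cdot\sqrt 2$ and factoring $\sqrt{p-1}$ out of the off-$w_0$ part, this matrix equals $\sqrt p\, w_0\, I - \sqrt{p-1}\,M$, where $M$ is the symmetric matrix with diagonal entries $\sqrt 2\, w_{ii}$ and off-diagonal $(i,j)$ entry $w_{ij}$ --- exactly the matrix displayed in the statement, and the proposition follows.

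There is no real obstacle here beyond careful bookkeeping. The two points that deserve a moment's attention are, first, that the single column $l=2$, although it carries the anomalous prefactor $(1+t^2)^{-1}$ rather than $(1+t^2)^{-1/2}$, contributes exactly the same pattern of entries as the other columns once that scalar has been extracted, so that the reduced matrix is uniform across all columns; and second, that the symmetry convention $w_{il}=w_{li}$ must be applied consistently, which is what makes the reduced matrix genuinely symmetric and so puts it in the displayed form.
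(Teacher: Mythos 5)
Your proposal is correct and follows essentially the same route as the paper: the paper's proof of this proposition is precisely the two preceding displayed computations (for $l=2$ and $l>2$), from which one reads off the columns of $A_{(te_2,w_0,w)}$ in the orthonormal basis $f_2,\ldots,f_n$ and pulls out the scalar $(1+t^2)^{-1}$ from the single column $l=2$ and $(1+t^2)^{-1/2}$ from each of the remaining $n-2$ columns, just as you do. Your observation that the $l=2$ column has the anomalous prefactor but the same residual entry pattern is exactly the bookkeeping point the paper leaves implicit.
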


We denote the $(n-1) \times (n-1)$-matrix by $C(w_1)$.

\subsection{The value of $I$}

We can now formulate our theorem for symmetric tensors.

\begin{prop} \label{prop:Symm}
For a standard Gaussian random symmetric tensor $v \in S^p \RR^n$
(relative to the Bombieri norm) the expected number of critical points
of $d_v$ on the manifold of non-zero symmetric tensors of rank one equals
\[
\frac{\sqrt{\pi}}{2^{(n-1)/2} \Gamma(\frac{n}{2})}
\mathbb{E}(|\det(\sqrt{p}w_0 I - \sqrt{p-1}C(w_1))|),
\]
where $w_0$ and the entries of $w_1$ are independent and
$\sim\mathcal{N}(0,1)$.
\end{prop}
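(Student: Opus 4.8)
The plan is to follow exactly the double-counting recipe already set up for ordinary tensors, now specialized to the symmetric case. Starting from the integral expression
\[ I=\frac{1}{(2\pi)^{\dim V/2}}
\int_{e_1^\perp} \int_{-\infty}^\infty \int_W
|\det J_{(u,w_0,w)} \phi| e^{-(w_0^2+||w||^2)/2} \dd w \dd
w_0 \dd u, \]
I would first invoke the block-triangular shape of $S^p(R_u)^{-1} J_{(u,w_0,w)}\phi$ derived above to replace $|\det J_{(u,w_0,w)}\phi|$ by $|\det A_{(u,w_0,w)}|$ (using that $S^p R_u$ is orthogonal of determinant $1$, and that $\|S^p R_u(w_0 e_1^p + w)\|^2 = w_0^2 + \|w\|^2$). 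Then, exactly as in Subsection~\ref{ssec:Parm}, the inner integrand over $w_0$ and $w$ depends on $u$ only through $t:=\|u\|$ because $R_{gu}=g R_u g^{-1}$ for $g\in\SO(e_1^\perp)$ and the Gaussian density is $\OO$-invariant; hence I can reduce to $u=te_2$ and pass to polar coordinates on $e_1^\perp\cong\RR^{n-1}$, picking up a factor $\mathrm{Vol}(S^{n-2})\int_0^\infty t^{n-2}(\cdots)\dd t$.

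Next I would plug in the Proposition above: $\det A_{(te_2,w_0,w)} = (1+t^2)^{-n/2}\det(\sqrt p\, w_0 I - \sqrt{p-1}\,C(w_1))$, where $C(w_1)$ does not involve $t$ at all. This makes the $t$-integral separate off completely as $\int_0^\infty t^{n-2}(1+t^2)^{-n/2}\dd t = \frac{\sqrt\pi}{2}\,\Gamma(\tfrac{n-1}{2})/\Gamma(\tfrac{n}{2})$, and combined with $\mathrm{Vol}(S^{n-2}) = 2\pi^{(n-1)/2}/\Gamma(\tfrac{n-1}{2})$ this yields a clean scalar prefactor $\sqrt\pi\,\pi^{(n-1)/2}/\Gamma(\tfrac{n}{2})$. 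What remains is
\[ \frac{\sqrt\pi\,\pi^{(n-1)/2}}{(2\pi)^{\dim V/2}\,\Gamma(\tfrac{n}{2})}
\int_{-\infty}^\infty \int_W |\det(\sqrt p\, w_0 I - \sqrt{p-1}\,C(w_1))|\,
e^{-(w_0^2+\|w\|^2)/2}\dd w\,\dd w_0. \]
Since the integrand depends on $w$ only through its components $w_1=(w_{ij})_{2\le i\le j\le n}$ (the observation in the Proposition that $a_{kl}$ is independent of $w'$), the $w'$-part of the Gaussian integrates to $(2\pi)^{(\dim W - \dim W_1)/2}$ where $\dim W_1 = \binom{n}{2} = n(n-1)/2$, and I collapse the rest into an expectation over a standard Gaussian on the $(1+n(n-1)/2)$-dimensional space spanned by $w_0$ and $w_1$. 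Bookkeeping of the powers of $2$ and $\pi$ then gives the stated prefactor: the $\pi$-powers all cancel between $\pi^{(n-1)/2}$, the $(2\pi)$'s, and $\sqrt\pi$, leaving $\sqrt\pi/(2^{(n-1)/2}\Gamma(\tfrac{n}{2}))$ times $\mathbb{E}(|\det(\sqrt p\, w_0 I-\sqrt{p-1}C(w_1))|)$.

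The main obstacle is the power-of-two bookkeeping: one has to check that $\dim V = \binom{p+n-1}{p}$, $\dim W = \dim V - n$, and $\dim W_1 = \binom{n}{2}$ fit together so that the Gaussian normalizations $(2\pi)^{-\dim V/2}$, $(2\pi)^{\dim W'/2}$, $(2\pi)^{1/2}$ (for the $w_0$-slot already included in the ambient), and the polar-coordinate constants combine to precisely $\sqrt\pi\,2^{-(n-1)/2}/\Gamma(\tfrac{n}{2})$. This is a purely arithmetic check once one is careful that $\mathbb{E}$ is taken with respect to the \emph{standard} Gaussian on the $w_0,w_1$ variables — i.e.\ that the residual $e^{-(w_0^2+\|w_1\|^2)/2}$ is divided by $(2\pi)^{(1+n(n-1)/2)/2}$, not by anything else. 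A secondary point worth spelling out, though it follows from the earlier discussion, is that the polar-coordinate substitution on $e_1^\perp$ and the invariance reduction to $u=te_2$ are legitimate because the map $\phi$ and hence $A_{(u,w_0,w)}$ is only defined away from $u=0$, a set of measure zero, so the integral is unaffected.
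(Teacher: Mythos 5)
Your proposal matches the paper's proof step for step: replace $|\det J\phi|$ by $|\det A|$ via the block-triangular factorization with orthogonal $S^pR_u$, use $\OO(e_1^\perp)$-invariance to reduce to $u=te_2$ and pass to polar coordinates, substitute the known Beta/sphere-volume values, factor the Gaussian over $W'$, and collect constants. The arithmetic you outline $(2\pi)^{-\dim V/2}\cdot\pi^{n/2}\cdot(2\pi)^{(\dim V-n(n+1)/2)/2}\cdot(2\pi)^{(1+n(n-1)/2)/2} = \sqrt{\pi}\,2^{-(n-1)/2}$ is exactly the paper's bookkeeping, so the claimed prefactor is right.
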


\begin{proof}
Combining the results from the previous subsections, we find
\begin{align*}
&I=\frac{1}{(2\pi)^{\dim V/2}} \mathrm{Vol}(S^{n-2})\\
&\cdot \int_0^\infty \int_{-\infty}^\infty \int_W
|\det(\sqrt{p}w_0 I - \sqrt{p-1}C(w_1))|
e^{-\frac{w_0^2+||w||^2}{2}} \frac{t^{n-2}}{(1+t^2)^{n/2}} \dd w \dd w_0 \dd t.
\end{align*}
Here, like in the ordinary tensor case, we have used that the function
$F(u)$ in the definition of $I$ is $\OO(e_1^\perp)$-invariant.
Now plug in
\[
\int\limits_0^{\infty}\frac{t^{n-2}}{(1+t^2)^{\frac{n}{2}}}\dd
t=\frac{\sqrt{\pi}}{2}\frac{\Gamma(\frac{n-1}{2})}{\Gamma(\frac{n}{2})}
\text{ and }
\mathrm{Vol}(S^{n-2})=\frac{2\pi^{\frac{n-1}{2}}}{\Gamma(\frac{n-1}{2})}
\]
to find that $I$ equals
\[
\frac{1}{2^{\dim V/2} \pi^{(\dim V-n)/2}
\Gamma(\frac{n}{2})}
\cdot \int_{-\infty}^\infty \int_W
|\det(\sqrt{p}w_0 I - \sqrt{p-1}C(w_1))|
e^{-\frac{w_0^2+||w||^2}{2}} \dd w \dd w_0.
\]
Finally, we can factor out the part of the integral concerning $w'$, which
lives in a space of dimension $\dim V - 1 - (n-1) - n(n-1)/2 = \dim V -
n(n+1)/2$. As a consequence, we need only integrate over the space $W_1$
where $w_1$ lives, and have to multiply by a suitable power
of $2\pi$:
\begin{align*}
I=
&\frac{1}{2^{n(n+1)/4} \pi^{n(n-1)/4}
\Gamma(\frac{n}{2})}\\
&\cdot \int_{-\infty}^\infty \int_{W_1}
|\det(\sqrt{p}w_0 I - \sqrt{p-1}C(w_1))|
e^{-\frac{w_0^2+||w_1||^2}{2}} \dd w_1 \dd w_0\\
&=\frac{\sqrt{\pi}}{2^{(n-1)/2} \Gamma(\frac{n}{2})}
\mathbb{E}(|\det(\sqrt{p}w_0 I - \sqrt{p-1}C(w_1))|)
\end{align*}
as desired.
\end{proof}

\subsection{Further dimension reduction}

Since the matrix $C$ from Proposition~\ref{prop:Symm} is just $\sqrt{2}$
times a random matrix from the standard Gaussian orthogonal ensemble,
and in particular has an orthogonally invariant probability density,
we can further reduce the dimension of the integral, as follows.

\begin{proof}[Proof of Theorem~\ref{thm:Symm}.]
First we denote the diagonal entries of $C$
\[\tilde{w}_{ii}:=\sqrt{2}w_{ii},\ i=2,\ldots,n\]
Then the joint density function of the random matrix $C$ equals
\[ f_{n-1}(\tilde{w}_{ii},w_{ij}):=\frac{1}{2^{(n-1)/2} \cdot
(2\pi)^{n(n-1)/4} }
e^{-(\tilde{w}_{22}^2+\cdots+\tilde{w}_{nn}^2)/4 - \sum_{2
\leq i<j \leq n} w_{ij}^2/2}. \]
This function is invariant under conjugating $C$ with an
orthogonal matrix, and as a consequence, the joint density of the ordered
tuple $(\lambda_2 \leq \ldots \leq \lambda_n)$ of
eigenvalues of $C$ equals
\[
Z(n-1)f_{n-1}(\Lambda)\prod_{i< j}(\lambda_j-\lambda_i),
\]
(see \cite[Theorem 3.2.17]{Muirhead82}\footnote{The theorem
there concerns the positive-definite case, but is true for
orthogonally invariant density functions on general
symmetric matrices.}). Here $\Lambda$ is the diagonal
matrix with $\lambda_2,\ldots,\lambda_n$ on the diagonal, and
\[Z(n-1)=\frac{\pi^{n(n-1)/4}}{\prod_{i=1}^{n-1}\Gamma(i/2)}.\]
Consequently, we have
\begin{align*}
I&=\frac{\sqrt{\pi}}{2^{(n-1)/2}\Gamma(\frac{n}{2})}
\int\limits_{\lambda_2\leq \ldots\leq
\lambda_{n}}\int\limits_{-\infty}^{\infty}\left(\prod_{i=2}^{n}|\sqrt{p}w_0-
\sqrt{p-1} \lambda_i|\right)
\left(\prod_{i<j}(\lambda_j-\lambda_i)
\right)\\
&\cdot Z(n-1)f_{n-1}(\Lambda)\left(\frac{1}{\sqrt{2\pi}}e^{-w_0^2/2}\right)\dd
w_0 \dd \lambda_2 \cdots \dd \lambda_n.\\
&=\frac{1}{2^{(n^2+3n-2)/4} \prod_{i=1}^n
\Gamma(i/2)}
\int\limits_{\lambda_2\leq \ldots\leq
\lambda_{n}}\int\limits_{-\infty}^{\infty}\left(\prod_{i=2}^{n}|\sqrt{p}w_0-
\sqrt{p-1} \lambda_i|\right)\\
&\cdot
\left(\prod_{i<j}(\lambda_j-\lambda_i)
\right) e^{-w_0^2/2-\sum_{i=2}^n \lambda_i^2/4}
\dd
w_0 \dd \lambda_2 \cdots \dd \lambda_n,
\end{align*}
as required.
\end{proof}

\subsection{The cone over the rational normal curve}
In the case where $n=2$, the integral from Theorem~\ref{thm:Symm} is
over a $2$-dimensional space and can be computed in closed form.

\begin{thm}
For $n=2$ the number of critical points in
Theorem~\ref{thm:Symm} equals $\sqrt{3p-2}$.
\end{thm}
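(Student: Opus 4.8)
The plan is to specialize the integral formula in Theorem~\ref{thm:Symm} to $n=2$ and then evaluate the resulting two-dimensional integral by recognizing its integrand as (a constant times) the expected absolute value of a single centred Gaussian random variable.

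First I would write out the $n=2$ instance explicitly. There is only one eigenvalue, which I call $\lambda$; the Vandermonde product $\prod_{i<j}(\lambda_j-\lambda_i)$ is empty and hence equals $1$, the constraint region $\lambda_2\le\cdots\le\lambda_n$ is all of $\RR$, and the prefactor $2^{(n^2+3n-2)/4}\prod_{i=1}^n\Gamma(i/2)$ becomes $2^{2}\cdot\Gamma(\tfrac{1}{2})\Gamma(1)=4\sqrt\pi$. Thus
\[
I=\frac{1}{4\sqrt\pi}\int_{-\infty}^{\infty}\int_{-\infty}^{\infty}
\bigl|\sqrt{p}\,w_0-\sqrt{p-1}\,\lambda\bigr|\,
e^{-w_0^2/2-\lambda^2/4}\,\dd\lambda\,\dd w_0 .
\]
Next I would observe that $e^{-w_0^2/2-\lambda^2/4}$ equals $2\pi\sqrt2$ times the joint density of two independent centred Gaussians $w_0\sim\mathcal N(0,1)$ and $\lambda\sim\mathcal N(0,2)$. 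Then $\sqrt p\,w_0-\sqrt{p-1}\,\lambda$ is a centred Gaussian of variance $p\cdot1+(p-1)\cdot2=3p-2$, so by the elementary identity $\mathbb E|Z|=\sigma\sqrt{2/\pi}$ for $Z\sim\mathcal N(0,\sigma^2)$ the double integral equals $2\pi\sqrt2\cdot\sqrt{3p-2}\cdot\sqrt{2/\pi}=4\sqrt\pi\,\sqrt{3p-2}$. Dividing by $4\sqrt\pi$ gives $I=\sqrt{3p-2}$, as claimed.

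There is no genuine obstacle here; the only point requiring care is the bookkeeping of normalizing constants --- in particular matching $e^{-\lambda^2/4}$ with the variance-$2$ (not variance-$1$) Gaussian density, and evaluating the $\Gamma$-factors at $n=2$. One can also avoid probabilistic language altogether: rescale $\lambda=\sqrt2\,r$ so that the weight becomes the rotation-invariant $e^{-(w_0^2+r^2)/2}$ (picking up a Jacobian $\sqrt2$) while $\sqrt p\,w_0-\sqrt{p-1}\,\lambda$ becomes $\sqrt p\,w_0-\sqrt{2(p-1)}\,r$, a linear form of Euclidean length $\sqrt{3p-2}$ in the $(w_0,r)$-plane; rotating so that this form becomes $\sqrt{3p-2}$ times a new coordinate, the integral factors as $\sqrt2\cdot\sqrt{3p-2}\cdot\bigl(\int_{\RR}|x|e^{-x^2/2}\dd x\bigr)\bigl(\int_{\RR}e^{-y^2/2}\dd y\bigr)=\sqrt2\cdot\sqrt{3p-2}\cdot2\cdot\sqrt{2\pi}=4\sqrt\pi\,\sqrt{3p-2}$, giving the same conclusion.
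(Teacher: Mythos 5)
Your computation is correct, and both of your variants (recognizing the integral as the expected absolute value of a centered Gaussian, or rotating coordinates so that the linear form $\sqrt{p}\,w_0-\sqrt{2(p-1)}\,r$ becomes a single axis) check out numerically: the constant in Theorem~\ref{thm:Symm} at $n=2$ is indeed $1/(4\sqrt{\pi})$, the Vandermonde is an empty product, and the variance bookkeeping $p\cdot 1 + (p-1)\cdot 2 = 3p-2$ together with $\mathbb{E}|Z|=\sigma\sqrt{2/\pi}$ gives exactly $\sqrt{3p-2}$.

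Worth noting: the paper states this theorem without proof, only remarking that the integral ``can be computed in closed form'' and pointing to \cite{Draisma13c} for a ``slightly different computation,'' so you are supplying the argument the text leaves implicit. A marginal shortcut available within this paper is to start from Proposition~\ref{prop:Symm} rather than from the eigenvalue form of Theorem~\ref{thm:Symm}: for $n=2$ that proposition reads $I=\sqrt{\pi/2}\cdot\mathbb{E}\bigl|\sqrt{p}\,w_0-\sqrt{2(p-1)}\,w_{22}\bigr|$ with $w_0,w_{22}\sim\mathcal N(0,1)$ i.i.d., which is precisely your one-Gaussian observation without first having to undo the eigenvalue change of variables. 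Either way the content is the same, and your proof is sound.
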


A slightly different computation yielding this result can
be found in \cite{Draisma13c}.

\subsection{Veronese embeddings of the projective plane}
In the case where $n=3$, the integral from Theorem~\ref{thm:Symm} gives
the number of critical points to the cone over the $p$-th Veronese
embedding of the projective plane. In this case the integral can be
computed in closed form, using symbolic integration in {\tt Mathematica}
we have the following result.

\begin{thm}
For $n=3$ the number of critical points in
Theorem~\ref{thm:Symm} equals
\[
1+4\cdot\frac{p-1}{3p-2}\sqrt{(3p-2)\cdot(p-1)}.
\]
\end{thm}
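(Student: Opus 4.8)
The goal is to evaluate, for $n=3$, the double integral from Theorem~\ref{thm:Symm}. With $n=3$ the eigenvalue variables are $\lambda_2 \leq \lambda_3$ (two of them) and there is one Gaussian variable $w_0$, so after the change of notation $\lambda_2 = a$, $\lambda_3 = b$ we must compute
\[
I = \frac{1}{2^{8/4}\,\Gamma(1/2)\Gamma(1)\Gamma(3/2)} \int\limits_{a \leq b} \int\limits_{-\infty}^{\infty} |\sqrt{p}\,w_0 - \sqrt{p-1}\,a|\cdot|\sqrt{p}\,w_0 - \sqrt{p-1}\,b|\cdot (b-a)\, e^{-w_0^2/2 - a^2/4 - b^2/4}\, \dd w_0\, \dd a\, \dd b.
\]
The prefactor simplifies: $\Gamma(1/2) = \sqrt{\pi}$, $\Gamma(1) = 1$, $\Gamma(3/2) = \sqrt{\pi}/2$, so the constant is $\frac{1}{4 \cdot \pi/2} = \frac{1}{2\pi}$. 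First I would symmetrize: drop the ordering constraint $a \leq b$ and instead integrate $(a,b)$ over all of $\R^2$, replacing $(b-a)$ by $|b-a|$ and dividing by $2$; the integrand (including the two absolute-value factors) is then symmetric in $a \leftrightarrow b$, which is convenient for a \texttt{Mathematica} attack.

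**Reducing to a closed form.** The key structural simplification is to rescale: set $a = 2x/\sqrt{p-1}$ ... actually cleaner is to first integrate out $w_0$. For fixed $a,b$ the inner integral $\int_{\R} |\sqrt p w_0 - \sqrt{p-1} a|\,|\sqrt p w_0 - \sqrt{p-1} b|\, e^{-w_0^2/2}\,\dd w_0$ is a piecewise-polynomial-times-Gaussian integral: the two roots $w_0 = \sqrt{(p-1)/p}\, a$ and $w_0 = \sqrt{(p-1)/p}\, b$ split $\R$ into three intervals, on each of which the integrand is $\pm(\sqrt p w_0 - \sqrt{p-1}a)(\sqrt p w_0 - \sqrt{p-1} b)$, a quadratic in $w_0$. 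Each piece integrates to a combination of error functions and Gaussians in the endpoints, i.e.\ an expression in $a$, $b$, $e^{-(p-1)a^2/(2p)}$, $e^{-(p-1)b^2/(2p)}$, and $\operatorname{erf}$ of scaled $a,b$. Substituting this back and then doing the remaining two-dimensional integral over $(a,b)$ against $|b-a|\,e^{-(a^2+b^2)/4}$ is exactly the kind of computation symbolic integration handles; the paper explicitly says this was done in \texttt{Mathematica}. So the proof is: (i) reduce the constant to $1/(2\pi)$; (ii) symmetrize in $a,b$; (iii) carry out the $w_0$-integral piecewise; (iv) feed the resulting two-dimensional integral to \texttt{Mathematica} (or split $|b-a|$ at $a=b$ and integrate each polynomial-times-Gaussian-times-erf piece); (v) simplify the output to the claimed $1 + 4\cdot\frac{p-1}{3p-2}\sqrt{(3p-2)(p-1)}$.

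**The main obstacle.** The genuine difficulty is not any single integral but the bookkeeping: there are \emph{three} absolute-value-induced cases from $w_0$ crossing the two roots, compounded by the $a \lessgtr b$ split from $|b-a|$, and the roots themselves depend on the sign and magnitude of $a$ versus $b$, so the "three intervals for $w_0$" description depends on which of $a,b$ is larger. Keeping the case analysis consistent and collecting the error-function and Gaussian terms so that everything recombines into a clean algebraic answer (with the $\sqrt{3p-2}$ and $\sqrt{p-1}$ appearing naturally) is where a hand computation would be error-prone — hence the reliance on a computer algebra system. As a sanity check one can verify $p=2$: the formula gives $1 + 4\cdot\frac14\sqrt{4\cdot 1} = 1 + 2 = 3$, matching the known Euclidean distance degree of the Veronese surface $v_2(\PP^2)$ over $\CC$ being $1+1+1=3$, which (since $p=2$ is the symmetric-matrix case, Example~\ref{ex:Symm}) must also be the real average for $n=3$. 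One can further cross-check numerically against the values tabulated in Section~\ref{sec:Values}.
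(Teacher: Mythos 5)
Your approach is the same as the paper's (the paper's proof is literally ``we computed this integral symbolically in \texttt{Mathematica}''), and your sanity check at $p=2$ is sensible, but you have slipped on the normalizing constant. With $n=3$ the exponent in Theorem~\ref{thm:Symm} is $(n^2+3n-2)/4 = (9+9-2)/4 = 16/4 = 4$, so the power of $2$ is $2^4=16$, not $2^{8/4}=4$ as you wrote. Combined with $\Gamma(1/2)\Gamma(1)\Gamma(3/2) = \pi/2$, the correct prefactor is $\tfrac{1}{16\cdot\pi/2} = \tfrac{1}{8\pi}$, not $\tfrac{1}{2\pi}$ --- a factor-of-$4$ discrepancy. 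Since the whole argument is ``set up the integral carefully and hand it to \texttt{Mathematica},'' getting the constant wrong is not harmless: the symbolic output would be $4$ times the claimed closed form, and your $p=2$ sanity check on the \emph{final} formula does not protect against a mis-transcribed constant in the \emph{integral you feed the CAS}. The remaining structural steps (symmetrizing in $\lambda_2,\lambda_3$, breaking the $w_0$-integral at the two roots $\sqrt{(p-1)/p}\,\lambda_i$, collecting Gaussians and error functions, then integrating out $\lambda_2,\lambda_3$) are a reasonable way to organize the computation and, modulo the constant, would reproduce the paper's result.
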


We do not know whether a similar closed formula exists for higher values
of $n$.

\subsection{Symmetric matrices} \label{ssec:Symmats}
In Example~\ref{ex:Symm} we saw that the case where $p=2$ concerns
rank-one approximations to symmetric matrices, and that the average
number of critical points is $n$.  We now show that the integral above
also yields $n$. Here we have
\begin{align*}
I&=\frac{\sqrt{\pi}}{2^{(n-1)/2}\Gamma(\frac{n}{2})}
\int\limits_{\lambda_2\leq \ldots\leq
\lambda_{n}}\int\limits_{-\infty}^{\infty}\left(\prod_{i=2}^{n}|\sqrt{2}w_0-
\lambda_i|\right)
\left(\prod_{i<j}(\lambda_j-\lambda_i)
\right)\\
&\cdot Z(n-1)f_{n-1}(\Lambda)\left(\frac{1}{\sqrt{2\pi}}e^{-w_0^2/2}\right)\dd
w_0 \dd \lambda_2 \cdots \dd \lambda_n.
\end{align*}
Now set $\lambda_1:=\sqrt{2}w_0$. Then the inner integral over $\lambda_1$
splits into $n$ integrals, according to the relative position of
$\lambda_1$ among $\lambda_2 \leq \cdots \leq \lambda_n$.
Moreover, these integrals are all equal. Hence we find
\begin{align*}
I&=n\frac{\sqrt{\pi}}{2^{(n-1)/2}\Gamma(\frac{n}{2})}
\int\limits_{\lambda_1\leq \ldots\leq
\lambda_{n}}
\left(\prod_{1 \leq i<j \leq n}(\lambda_j-\lambda_i)
\right)\\
&\cdot Z(n-1)
\cdot
\frac{1}{2^{n/2} \cdot (2\pi)^{(n(n-1)+2)/4}}
e^{-(\lambda_{1}^2+\cdots+\lambda_n^2)/4}
\dd \lambda_1 \cdots \dd \lambda_n\\
&=n\frac{\sqrt{\pi}}{2^{(n-1)/2}\Gamma(\frac{n}{2})}
\int\limits_{\lambda_1\leq \ldots\leq
\lambda_{n}}
\left(\prod_{1 \leq i<j \leq n}(\lambda_j-\lambda_i)
\right)\\
&\cdot Z(n-1)
\cdot
f_n(\diag(\lambda_1,\ldots,\lambda_n)) \cdot (2\pi)^{(n-1)/2}
\dd \lambda_1 \cdots \dd \lambda_n.
\end{align*}
Now, again by \cite[Theorem 3.2.17]{Muirhead82}, the integral of $\prod_{1 \leq
i<j \leq n} (\lambda_j-\lambda_i) \cdot f_n$ equals $1/Z(n)$. Inserting
this into the formula yields $I=n$.

\section{Values} \label{sec:Values}

In this section we record some values of the expressions in
Theorem~\ref{thm:Main} and~\ref{thm:Symm}.

\subsection{Ordinary tensors}

Below is a table of expected numbers of critical rank-one approximations
to a Gaussian tensor, computed from Theorem~\ref{thm:Main}.  We also
include the count over $\CC$ from \cite{Friedland12b}. Unfortunately, the
dimensions of the integrals from Theorem~\ref{thm:Main} seem to prevent
accurate computation numerically, at least with all-purpose software
such as {\tt Mathematica}. Instead, we have estimated these integrals as
follows: for some initial value $I$ (we took $I=15$), take $2^I$ samples
of $C$ from the multivariate standard normal distribution, and compute
the average absolute determinant. Repeat with a new sample of size $2^I$,
and compare the absolute difference of the two averages divided by the
first estimate. If this relative difference is $<10^{-4}$, then stop. If not,
then group the current $2^{I+1}$ samples together, sample another $2^{I+1}$,
and perform the same test. Repeat this process, doubling the sample size
in each step, until the relative difference is below $10^{-4}$. Finally,
multiply the last average by the constant in front of the integral in
Theorem~\ref{thm:Main}. We have not computed a confidence interval for
the estimate thus computed, but repetitions of this procedure suggest that
the first three computed digits are correct; we give one more digit below.
\[
\centering
    \begin{tabular}{l|l|l}
      Tensor format & average count over $\RR$ & count over
      $\CC$ \\ \hline
    $n\times m$ & $\min(n,m)$  & $\min(n,m)$ \\
    $2^3=2\times2\times2$         & 4.287         & 6        \\
    $2^4$         & 11.06	& 24        \\
    $2^5$         & 31.56	& 120        \\
    $2^6$         & 98.82	& 720        \\
    $2^7$         & 333.9	& 5040        \\
    $2^8$         & $1.206\cdot 10^3$         & 40320        \\
    $2^9$         & $4.611\cdot 10^3$         & 362880        \\
    $2^{10}$	  & $1.843\cdot 10^4$         & 3628800	\\
    $2\times 2 \times 3$         & 5.604         & 8        \\
    $2\times 2 \times 4$         & 5.556         & 8        \\
    $2\times 2 \times 5$         & 5.536         & 8        \\
    $2\times 3 \times 3$         & 8.817         & 15        \\
    $2\times 3 \times 4$         & 10.39         & 18        \\
    $2\times 3 \times 5$         & 10.28         & 18        \\
    $3\times 3\times 3$          & 16.03         & 37        \\
    $3\times 3\times 4$ 	 & 21.28         & 55 \\
    $3\times 3\times 5$         & 23.13         & 61        \\
    \end{tabular}
\]

Except in some small cases, we do not expect that there exists a closed
form expression for $\mathbb{E}(|\det(C)|)$. However, asymptotic results
on expected absolute determinants such as those in \cite{Tao12} should
give asymptotic results for the counts in Theorems~\ref{thm:Main}
and~\ref{thm:Symm}, and it would be interesting to compare these with
the count over $\CC$.

From \cite{Friedland12b} we know that the count for ordinary tensors
stabilizes for $n_p - 1 \geq \sum_{i=1}^{p-1} (n_i-1)$, i.e., beyond the
{\em boundary format} \cite[Chapter 14]{Gelfand94}, where the variety
dual to the variety of rank-one tensors ceases to be a hypersurface.
We observe a similar behavior experimentally for the average count
according to Theorem~\ref{thm:Main}, although the count seems to {\em
decrease} slightly rather than to {\em stabilize}. It would be nice to
prove this behavior from our formula, but even better to give a geometric
explanation both over $\RR$ and over $\CC$.

\subsection{Symmetric tensors}

The following table contains the average number of rank-one tensor
approximations to $S^p\mathbb{R}^n$ according to Theorem~\ref{thm:Symm}
(on the left). The integrals here are over a much lower-dimensional
domain than in the previous section, and they {\em can} be evaluated
accurately with {\tt Mathematica}. On the right we list the corresponding count over $\CC$. By \cite[Theorem 12]{Friedland12b} these values are simply $1+(p-1)+\cdots+(p-1)^{n-1}$.
\\
    \begin{tabular}{c|cccc}
     $p\backslash n$ & 1 & 2 & 3& 4\\ \hline
    1 & 1  & 1& 1& 1\\
    2 & 1  & 2& 3& 4\\
    3 & 1  & $\sqrt{7}$&
$1+4 \cdot \frac{2}{7} \cdot \sqrt{7\cdot 2}$&
9.3951\\
    4 & 1  & $\sqrt{10}$&
$1+4 \cdot \frac{3}{10} \cdot \sqrt{10\cdot 3}$&
16.254\\
    5 & 1  & $\sqrt{13}$&
$1+4 \cdot \frac{4}{13} \cdot \sqrt{13\cdot 4}$&
24.300\\
    6 & 1  & $\sqrt{16}$&
$1+4 \cdot \frac{5}{16} \cdot \sqrt{16\cdot 5}$&
33.374\\
    7 & 1  & $\sqrt{19}$&
$1+4 \cdot \frac{6}{19} \cdot \sqrt{19\cdot 6}$&
43.370\\
    8 & 1  & $\sqrt{22}$&
$1+4 \cdot \frac{7}{22} \cdot \sqrt{22\cdot 7}$&
54.211\\
    9 & 1  & $\sqrt{25}$&
$1+4 \cdot \frac{8}{25} \cdot \sqrt{25\cdot 8}$&
65.832\\
    10& 1  & $\sqrt{28}$&
$1+4 \cdot \frac{9}{28} \cdot \sqrt{28\cdot 9}$&
78.185\\
    \end{tabular}
    \hfill
\begin{tabular}{c|cccc}
$p\backslash n$ & 1 & 2 & 3& 4\\ \hline
1 & 1 & 1 & 1 & 1\\
2&1& 2& 3& 4\\
3&1& 3& 7& 15\\
4&1& 4& 13& 40\\
5&1& 5& 21& 85\\
6&1& 6& 31& 156\\
7&1& 7& 43& 259\\
8&1& 8& 57& 400\\
9&1& 9& 73& 585\\
10&1& 10& 91& 820
\end{tabular}


\newcommand{\etalchar}[1]{$^{#1}$}

\end{document}